 \def\BlackBoxes{\global\overfullrule5\p@}
 \newcommand*{\beq}{$$\refstepcounter{equation}}
 \newcommand*{\eeq}{\eqno(\theequation)$$}
 \newcommand*{\bal}{\begin{aligned}}
 \newcommand*{\eal}{\end{aligned}}
 \newcommand*{\qa}{,\qquad}
 \newcommand*{\qb}{,\quad}
 \newcommand*{\mf}[1]{\boldsymbol{#1}}  
 \newcommand*{\ci}{\mathaccent"7017 }  
 \newcommand*{\hb}[1]{\hbox{$#1$}} 
 \newcommand*{\sdot}{\!\cdot\!}
 \newcommand*{\sco}{\kern2pt\colon\kern2pt}
 \newcommand*{\sn}{\kern1pt|\kern1pt}
 \newcommand*{\bsn}{\kern1pt\big|\kern1pt}
 \newcommand*{\ssm}{\!\setminus\!}
 \newcommand*{\bssm}{\,\big\backslash\,}
 \newcommand*{\vsdot}{\hbox{$|\sdot|$}}
 \newcommand*{\Vsdot}{\hbox{$\|\sdot\|$}}
 \newcommand*{\hh}[1]{{\textbf{#1}}}
 \newcommand*{\npb}{\postdisplaypenalty=10000}
 \newcommand*{\pr}{\hbox{$(\cdot,\cdot)$}}
 \newcommand*{\prsn}{\hbox{$(\cdot\sn\cdot)$}}
 \newcommand*{\pw}{\hbox{$\dl{}\sdot{},{}\sdot{}\dr$}}
 \newcommand*{\ol}{\overline}
 \newsavebox{\Prel}
 \sbox{\Prel}{\begin{picture}(2,6)(0,0)\put(1,2.5){\circle*{2}}\end{picture}}
 \newcommand*{\btdot}{\mathrel{\usebox{\Prel}}}
 \newsavebox{\Ptop}
 \sbox{\Ptop}{\begin{picture}(2,2)(-1,-1)\put(0,0){\circle*{2}}\end{picture}}
 \newcommand*{\thdot}[3]{\mbox{\slap{\kern#3ex\raisebox{#2ex}{\usebox{\Ptop}}}{$#1$}}}
 \newcommand*{\thgK}{\thdot{\gK}{1.9}{.7}}
 \newcommand*{\thgS}{\thdot{\gS}{1.9}{.4}}
 \newcommand*{\thg}{\thdot{g}{1.3}{.4}}   
 \newcommand*{\ithg}{{\thdot{\scriptstyle g}{0.9}{.2}}\vph{\scriptstyle g}} 
 \newcommand*{\thia}{\rlap{\thdot{\ia}{1.3}{.4}}\ph{\ia}}      
 \newcommand*{\thka}{\rlap{\thdot{\ka}{1.3}{.4}}{\ph{\ka}}}    
 \newcommand*{\ithka}{{\thdot{\scriptstyle\ka}{.9}{.4}}}   
 \newcommand*{\thna}{\rlap{\thdot{\na}{1.9}{.65}}{\ph{\na}}}   
 \newcommand*{\thrho}{\rlap{\thdot{\rho}{1.3}{.4}}\ph{\rho}} 
 \newcommand*{\thu}{\thdot{u}{1.3}{.4}}   
 \newcommand*{\ph}{\phantom}
 \newcommand*{\vph}{\vphantom}
 \newcommand*{\cip}{\ci{\vph{a}}\kern.2ex}                        
 \newcommand*{\hr}{\hookrightarrow}
 \newcommand*{\Llr}{\Longleftrightarrow}
 \newcommand*{\ra}{\rightarrow}
 \newcommand*{\dl}{\langle}
 \newcommand*{\dr}{\rangle}
 \newcommand*{\slap}[1]{\hspace{0pt}\hbox to 0pt{#1\hss}}
 \newcommand*{\card}{\mathop{\rm card}\nolimits}
 \newcommand*{\tdiv}{\mathop{\rm div}\nolimits}
 \newcommand*{\dom}{\mathop{\rm dom}\nolimits}
 \newcommand*{\gl}{\mathop{\rm gl}\nolimits}
 \newcommand*{\grad}{\mathop{\rm grad}\nolimits}
 \newcommand*{\cc}{{\rm cc}}
 \newcommand*{\Lis}{{\mathcal L}{\rm is}}
 \newcommand*{\loc}{{\rm loc}}
 \renewcommand*{\Re}{\mathop{\text{\rm Re}}\nolimits}
 \newcommand*{\is}{\subset}
 \newcommand*{\es}{\emptyset}
 \newcommand*{\iy}{\infty}
 \newcommand*{\mt}{\mapsto}
 \newcommand*{\pl}{\partial}
 \newcommand*{\pa}{\partial^\alpha}
 \newcommand*{\sh}{\sharp}
 \newcommand*{\cona}{\kern-1pt}
 \newcommand*{\coU}{\kern-1pt}
 \newcommand*{\coV}{\kern-1pt}
 \newcommand*{\cotdiv}{\kern-1pt}
 \newcommand*{\coW}{\kern-1pt}
 \newcommand*{\al}{\alpha}
 \newcommand*{\ba}{\beta}
 \newcommand*{\da}{\delta}
 \newcommand*{\Ga}{\Gamma}
 \newcommand*{\ga}{\gamma}
 \newcommand*{\ia}{\iota}
 \newcommand*{\ka}{\kappa}
 \newcommand*{\tk}{{\tilde{\ka}}}
 \newcommand*{\lda}{\lambda}
 \newcommand*{\na}{\nabla}
 \newcommand*{\Om}{\Omega}
 \newcommand*{\om}{\omega}
 \newcommand*{\pO}{{\pl\Omega}}
 \newcommand*{\Sa}{\Sigma}
 \newcommand*{\sa}{\sigma}
 \newcommand*{\ve}{\varepsilon}
 \newcommand*{\vp}{\varphi}
 \newcommand*{\AB}{(\cA,\cB)}
 \newcommand*{\ABs}{(\cA',\cB')}
 \newcommand*{\hAB}{(\hat\cA,\hat\cB)}
 \newcommand*{\EF}{(E,F)}
 \newcommand*{\FE}{(F,E)}
 \newcommand*{\IB}{(I,B)}
 \newcommand*{\Mg}{(M,g)}
 \newcommand*{\tMtg}{(\tilde{M},\tilde{g})}
 \newcommand*{\Mhg}{(M,\hat g)}
 \newcommand*{\rgK}{(\rho,\gK)}
 \newcommand*{\trtK}{(\tilde{\rho},\tilde{\gK})}
 \newcommand*{\SV}{(S,V)}
 \newcommand*{\hV}{(\hat V)}
 \newcommand*{\Vrho}{(V;\rho)}
 \newcommand*{\XcX}{(X,\cX)}
 \newcommand*{\XY}{(X,Y)}
 \newcommand*{\BB}{{\mathbb B}}
 \newcommand*{\BH}{{\mathbb H}}
 \newcommand*{\BJ}{{\mathbb J}}
 \newcommand*{\BN}{{\mathbb N}}
 \newcommand*{\BR}{{\mathbb R}}
 \newcommand*{\cA}{{\mathcal A}}
 \newcommand*{\cB}{{\mathcal B}}
 \newcommand*{\cD}{{\mathcal D}}
 \newcommand*{\cL}{{\mathcal L}}
 \newcommand*{\cS}{{\mathcal S}}
 \newcommand*{\cW}{{\mathcal W}}
 \newcommand*{\cX}{{\mathcal X}}
 \newcommand*{\gK}{{\mathfrak K}}
 \newcommand*{\gN}{{\mathfrak N}}
 \newcommand*{\gS}{{\mathfrak S}}
 \newcommand*{\sC}{{\mathsf C}}
 \newcommand*{\sH}{{\mathsf H}}
 \renewcommand*{\ldots}{\mathinner{\ldotp\ldotp\ldotp}}
 \newif\ifinany@
 \def\column@plus{%
    \global\advance\column@\@ne
 }
 \def\add@amps#1{%
    \begingroup
        \count@#1
        \DN@{}%
        \loop
            \ifnum\count@>\column@
                \edef\next@{&\next@}%
                \advance\count@\m@ne
        \repeat
    \@xp\endgroup
    \next@
 }
 \def\Let@{\let\\\math@cr}
 \def\restore@math@cr{\def\math@cr@@@{\cr}}
 \def\default@tag{\let\tag\dft@tag}
 \newbox\strutbox@
 \def\strut@{\copy\strutbox@}
 \addto@hook\every@math@size{%
  \global\setbox\strutbox@\hbox{\lower.5\normallineskiplimit
         \vbox{\kern-\normallineskiplimit\copy\strutbox}}}
 \renewcommand{\start@aligned}[2]{%
    \RIfM@\else
        \nonmatherr@{\begin{\@currenvir}}%
    \fi
    \null\,%
    \if #1t\vtop \else \if#1b \vbox \else \vcenter \fi \fi \bgroup
        \maxfields@#2\relax
        \ifnum\maxfields@>\m@ne
            \multiply\maxfields@\tw@
            \let\math@cr@@@\math@cr@@@alignedat
        \else
            \restore@math@cr
        \fi
        \Let@
        \default@tag
        \ifinany@\else\openup\jot\fi
        \column@\z@
        \ialign\bgroup
           &\column@plus
            \hfil
            \strut@
            $\m@th\displaystyle{##}$%
           &\column@plus
            $\m@th\displaystyle{{}##}$%
            \hfil
            \crcr
 }
 \renewenvironment{aligned}[1][c]{%
    \start@aligned{#1}\m@ne
 }{%
    \crcr\egroup\egroup
 }
 \def\cprime{$'$}
 \newif \ifcomments
 \def\slComm#1#2{\ifcomments%
     {}\vadjust{\kern#1%
      \vtop to 0pt{\vss\hbox to\hsize{\hfill\strut\rlap{ \rm#2}}\null}
      \kern-#1}\fi%
 }
 \def\Label#1{\label{#1}\slComm{0mm}{\hbox{\rm #1}}}  
 \def\LabelT#1{\label{#1}\strut\slComm{1mm}{\hbox{\rm #1}}}
 \newcommand*{\Eqref}[1]{\hbox{\rm(\ref{#1})}}
 \newtheorem{thm}{Theorem}[section]
 \newtheorem{cor}[thm]{Corollary}
 \newtheorem{lem}[thm]{Lemma}
 \newtheorem*{supp}{Supplement}
 \theoremstyle{definition}
 \newtheorem*{proofofD.P}{Proof of Theorem~{{\textbf\rm\@newref{thm-D.P}}}} 
 \theoremstyle{remark}
 \newtheorem{rems}[thm]{Remarks}
 \newtheorem{exs}[thm]{Examples}
 \numberwithin{equation}{section}
\begin{document}

%
%
%
%
%
%
%
%
%

\title[Parabolic Equations on Uniformly Regular Riemannian Manifolds]
 {Parabolic Equations on Uniformly Regular Riemannian Manifolds 
 and Degenerate\\ Initial Boundary Value Problems}

\author[Herbert Amann]{Herbert Amann}

\address{%
Math.\ Institut\\ 
Universit\"at Z\"urich\\
Winterthurerstr.~190\\ 
CH 8057 Z\"urich\\ 
Switzerland}

\email{\sf herbert.amann@math.uzh.ch}

\subjclass{Primary 58J35, 35K65; Secondary 53C20, 35K20}

\keywords{Noncompact Riemannian manifolds, linear parabolic boundary value 
 problems, degenerate equations, weighted Sobolev spaces}

\date{January 1, 2004}
\dedicatory{Dedicated to Professor Yoshihiro Shibata on the occasion 
of his sixtieth birthday}

\begin{abstract}
 In this work there is established an optimal existence and regularity 
 theory for second order linear parabolic differential equations on a large 
 class of noncompact Riemannian manifolds. Then it is shown that it provides 
 a general unifying approach to problems with strong degeneracies in the 
 interior or at the boundary. 
\end{abstract}

\maketitle
 \section{Introduction\Label{sec-A}} 
 This paper is devoted to second order initial boundary value problems for 
 linear parabolic equations on a wide class of noncompact Riemannian 
 manifolds, termed `uniformly regular'. Important examples are complete 
 Riemannian manifolds with no boundary and bounded 
 geometry.\footnote{Precise definitions of and notations for all terms used 
 in this introduction without 
 further explanation are found in the following sections and the 
 appendix.} In this setting there is already a rich theory for linear 
 parabolic equations~---~predominantly heat equations~---~based on kernel 
 estimates. Our main interest concerns, however, noncompact Riemannian 
 manifolds with boundary for which very little is known so far (see the 
 following sections for references). Prototypes of such cases are 
 \hbox{$m$-dimensional} Riemannian submanifolds of~$\BR^n$ with compact 
 boundary or funnel-like ends (cf.~Examples~\ref{exs-P.I.ex}). 
 
 \par 
 In order to give the flavor of our main results we consider in this 
 introduction a simplified version of the general problem. Namely, we 
 restrict ourselves to autonomous equations with homogeneous boundary 
 conditions. 
 
 \par 
 Let 
 \hb{M=\Mg} be a Riemannian manifold. We set 
 \beq \Label{A.d} 
 \cA u:=-\tdiv(a\btdot\grad u), 
 \eeq 
 with $a$~being a symmetric positive definite 
 $(1,1)$-tensor field on~$M$ which is bounded and has 
 bounded and continuous first order (covariant) derivatives. This is 
 expressed by saying that $\cA$~is a regular uniformly strongly elliptic 
 differential operator. 
 
 \par 
 We assume that $\pl_0M$~is open and closed in~$\pl M$ and 
 \hb{\pl_1M:=\pl M\ssm\pl_0M}. Then we put 
 $$ 
 \cB_0u:=u\text{ on }\pl_0M 
 \qb \cB_1u:=(\nu\sn a\btdot\grad u)\text{ on }\pl_1M, 
 $$ 
 where these operators are understood in the sense of traces and $\nu$~is 
 the inward pointing unit normal vector field on~$\pl_1M$. Thus 
 \hb{\cB:=(\cB_0,\cB_1)} is the Dirichlet boundary operator on~$\pl_0M$ and 
 the Neumann operator on~$\pl_1M$. 
 
 \par 
 Throughout this paper, 
 \hb{0<T<\iy} and 
 \hb{J:=[0,T]}. We write~$M_T$ for the space time cylinder 
 \hb{M\times J}. Moreover, 
 \hb{\pl=\pl_t} is the `time derivative', 
 \hb{\pl M_T:=\pl M\times J} the lateral boundary, and 
 \hb{M_0=M\times\{0\}} the `initial surface' of~$M_T$. Then we consider the 
 problem  
 \beq \Label{A.P} 
 \pl u+\cA u=f\text{ on }M_T 
 \qb \cB u=0\text{ on }\pl M_T
 \qb u=u_0\text{ on }M_0. 
 \eeq 
 The last equation is to be understood as 
 \hb{\ga_0u=u_0} with the `initial' trace operator~$\ga_0$. 
 
 \par 
 Of course, $\pl_0M$ or $\pl_1M$ or both may be empty. In such a situation 
 obvious interpretations and modifications are to be applied. 
 
 \par 
 We are interested in a strong \hbox{$L_p$-theory} for \Eqref{A.P}. To 
 describe it we have to introduce (fractional order) Sobolev spaces. We 
 always assume that 
 \hb{1<p<\iy}. The Sobolev space~$W_{\coW p}^k(M)$ is defined for 
 \hb{k\in\BN} to be the 
 completion of~$\cD(M)$, the space of smooth functions with compact support, 
 in $L_{1,\loc}(M)$ with respect to the norm 
 \beq \Label{A.N}  
 u\mt 
 \Bigl(\sum_{j=0}^k\big\|\,|\na^ju|_{g_j^0}\,\big\|_{L_p(M)}^p\Bigr)^{1/p}. 
 \eeq 
 Here 
 \hb{\na=\na_{\cona g}} is the Levi-Civita covariant derivative and 
 \hb{\vsdot_{g_j^0}}~the 
 $(0,j)$-tensor norm naturally induced by~$g$. Thus 
 \hb{W_{\coW p}^0(M)=L_p(M)}. Moreover, 
 $$ 
 W_{\coW p,\cB}^2(M):=\bigl\{\,u\in W_{\coW p}^2(M) 
 \ ;\ \cB u=0\,\bigr\}.  
 $$ 
  
 \par 
 We also need the space~$W_{\coW p}^{2-2/p}(M)$ which is defined for 
 \hb{p\neq2} by real interpolation: 
 $$ 
 W_{\coW p}^{2-2/p}(M):=\bigl(L_p(M),W_{\coW p}^2(M)\bigr)_{1-1/p,p}. 
 $$ 
 Then 
 \beq \Label{A.WB} 
  W_{\coW p,\cB}^{2-2/p}(M):= 
 \left\{ 
 \bal 
 &\bigl\{\,u\in W_{\coW p}^{2-2/p}(M)\ ;\ \cB u=0\,\bigr\},
  &\quad 3      &<p<\iy,\\  
 &\bigl\{\,u\in W_{\coW p}^{2-2/p}(M)\ ;\ \cB_0u=0\,\bigr\},
  &\quad 3/2    &<p<3,\\ 
 &W_{\coW p}^{2-2/p}(M),
  &\quad 1      &<p<3/2.\\ 
 \eal 
 \right. 
 \eeq 
 
 \par 
 We set 
 \beq \Label{A.A}  
 A:=\cA\sn W_{\coW p,\cB}^2(M), 
 \eeq 
 considered as an unbounded linear operator in~$L_p(M)$ with 
 domain~$W_{\coW p,\cB}^2(M)$. Then \Eqref{A.P} can be expressed 
 as an initial value problem for the evolution equation 
 \beq \Label{A.J}  
 \thu+Au=f\text{ on }J 
 \qb u(0)=u_0 
 \eeq 
 in~$L_p(M)$. 
 
 \par 
 Now we are ready to formulate our main result in the present model 
 setting. It is a special case of Theorem~\ref{thm-P.S} 
 \begin{thm}\LabelT{thm-A.P} 
 Let $M$ be a uniformly regular Riemannian manifold and let 
 \hb{p\notin\{3/2,3\}}. Suppose that $\cA$~is regular and uniformly strongly 
 elliptic. Then \Eqref{A.P} has for each 
 $$ 
 (f,u_0)\in L_p\bigl(J,L_p(M)\bigr)\times W_{\coW p,\cB}^{2-2/p}(M) 
 $$ 
 a unique solution 
 $$ 
 u\in L_p\bigl(J,W_{\coW p,\cB}^2(M)\bigr) 
 \cap W_{\coW p}^1\bigl(J,L_p(M)\bigr).  
 $$ 
 The map 
 \hb{(f,u_0)\mt u} is linear and continuous.\\ 
 Equivalently: 
 $-A$~generates an analytic semigroup on $L_p(M)$ and 
 has the property of maximal regularity. 
 \end{thm}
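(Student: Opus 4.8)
The plan is to establish maximal $L_p$-regularity for the evolution equation by verifying that $-A$ generates an analytic semigroup and that $A$ admits a bounded $H^\infty$-calculus (or, equivalently, is $R$-sectorial) on $L_p(M)$. Since the assertion is stated to be a special case of Theorem~\ref{thm-P.S}, I would proceed in the standard pattern for parabolic problems on manifolds of bounded geometry: reduce the global problem on $M$ to model problems via a uniformly locally finite atlas, solve the model problems on $\BR^m$ and the half-space $\BH^m$, and patch the resolvents together.

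\smallskip

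\emph{First}, I would exploit the uniform regularity of $M=\Mg$. By definition this furnishes an atlas whose coordinate changes, together with the metric $g$ and all the structure tensors, are bounded with bounded derivatives uniformly across the charts, and a subordinate partition of unity $\{\pi_\kappa\}$ with uniformly bounded derivatives. The coefficient $a$ is regular and uniformly strongly elliptic, so in each chart the pulled-back operator $\cA$ is a uniformly elliptic operator on $\BR^m$ with uniformly continuous, bounded leading coefficients and bounded lower-order terms, and the boundary operator $\cB$ becomes a uniform Lopatinskii--Shapiro system (Dirichlet on $\pl_0 M$, Neumann on $\pl_1 M$). \emph{Second}, for the localized model problems on $\BR^m$ and the half-space, the classical theory of Agmon--Douglis--Nirenberg together with the resolvent estimates for parameter-elliptic boundary value problems yields, for $\lda$ in a sector $\Sigma_\theta$ with $|\arg\lda|\le\theta<\pi$ and $|\lda|$ large, the uniform resolvent bound
$$
|\lda|\,\|u\|_{L_p} + |\lda|^{1/2}\,\|u\|_{W_p^1} + \|u\|_{W_p^2}
 \le C\,\|(\lda+\cA)u\|_{L_p},
$$
with $C$ \emph{independent of the chart}, which is exactly where uniform regularity is essential. \emph{Third}, I would assemble these local resolvents: writing $(\lda+A)^{-1} \approx \sum_\kappa \pi_\kappa (\lda+\cA_\kappa)^{-1}\chi_\kappa$ for suitable cutoffs $\chi_\kappa$, the commutator terms arising from differentiating through $\pi_\kappa$ are of lower order and carry a gain of $|\lda|^{-1/2}$, so for $|\lda|$ sufficiently large they are absorbed by a Neumann series. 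This produces a global resolvent estimate $\|(\lda+A)^{-1}\|_{\cL(L_p(M))}\le C/|\lda|$ on $\Sigma_\theta$, giving analyticity of the semigroup and sectoriality of $A$.

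\smallskip

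To upgrade sectoriality to \emph{maximal regularity}, I would work throughout at the level of $R$-boundedness rather than mere boundedness. The model resolvents on $\BR^m$ and $\BH^m$ are $R$-sectorial because $L_p(\BR^m)$ and $L_p(\BH^m)$ have the UMD property and the symbols are parameter-elliptic Fourier multipliers to which the operator-valued Mikhlin multiplier theorem (Weis) applies; one checks that the resolvent family $\{\lda(\lda+\cA_\kappa)^{-1}\}$ is $R$-bounded uniformly in $\kappa$. Since $R$-boundedness is stable under the finite-band-width patching and under the absorption of the lower-order commutators, the global family $\{\lda(\lda+A)^{-1} : \lda\in\Sigma_\theta\}$ is $R$-bounded, so $A$ is $R$-sectorial with angle $<\pi/2$. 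By the Weis characterization of maximal $L_p$-regularity on UMD spaces, this is equivalent to the solvability and continuous dependence asserted for \Eqref{A.J}, with the initial data living precisely in the real-interpolation trace space $(L_p(M),W_{\coW p,\cB}^2(M))_{1-1/p,p}=W_{\coW p,\cB}^{2-2/p}(M)$; the case distinctions on $p$ at the exclusion of $p\in\{3/2,3\}$ are exactly the thresholds at which the boundary condition survives the trace, matching \Eqref{A.WB}.

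\smallskip

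The main obstacle is the \emph{uniformity} of all constants across the noncompact manifold: every estimate—the local elliptic bound, the $R$-bound on each model resolvent, the control of the commutators, and the bound on the coordinate transformations—must be shown independent of the chart index $\kappa$. On a compact manifold the finitely many charts make this automatic, but here it is the defining hypotheses of uniform regularity (uniform ellipticity of $a$, uniform bounds on $\na a$, uniform geometry) that must be invoked at each stage to keep the patched Neumann series convergent with a chart-independent radius. The second, more technical, difficulty is verifying that the weighted Sobolev scale $W_{\coW p}^k(M)$ built from the intrinsic tensor norms in \Eqref{A.N} interpolates correctly and that the boundary trace theory behaves uniformly, so that the interpolation identity defining $W_{\coW p,\cB}^{2-2/p}(M)$ and the characterization of the trace space of the maximal-regularity class coincide; this is what makes the stated space of initial data sharp.
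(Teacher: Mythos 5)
Your outline is mathematically sound and is, in essence, the standard localization machinery underlying results of this type, but it is a genuinely different route from the one the paper takes. The paper performs no localization, no model-problem analysis, and no $R$-boundedness argument for this statement: Theorem~\ref{thm-A.P} is declared a special case of Theorem~\ref{thm-P.S}, whose proof consists of two observations~---~that in the autonomous homogeneous setting the compatibility-constrained data space $\cW_{p,\cc}^{(2,1)}(M_T)$ collapses to $L_p(M_T)\times W_{\coW p,\cB}^{2-2/p}(M)$, so that \Eqref{P.Lis} is a reformulation of the isomorphism \Eqref{P.WW} of Theorem~\ref{thm-P.I.P}, and that generation of an analytic semigroup then follows from maximal regularity by a theorem of Dore~\cite{Dor93b} (you get both at once from $R$-sectoriality, so you never need Dore). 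Theorem~\ref{thm-P.I.P} itself is proved by algebra rather than analysis: the divergence-form operator is rewritten in non-divergence form via $a_2=-a^\sh$, \ $a_1=\vec a-\tdiv(a^\sh)$, \ $b_1=\nu_\flat\btdot\ga a^\sh$, the coefficient regularity is checked, and the main theorem of~\cite{Ama13a} is cited. So the uniform parameter-elliptic chart estimates, resolvent patching by a Neumann series with chart-independent constants, and the Weis/UMD characterization of maximal regularity that you describe are precisely the analysis the paper outsources to~\cite{Ama13a} (compare~\cite{DHP03a} for the model problems). Your approach buys a self-contained argument that makes visible where uniform regularity enters; the paper's approach buys brevity and far greater generality, since the same citation chain yields the nonautonomous, nonhomogeneous Theorem~\ref{thm-P.I.P} with its supplement. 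Before your sketch is a proof, two points you correctly flag as difficulties would need real work: the Grisvard-type identification $\bigl(L_p(M),W_{\coW p,\cB}^2(M)\bigr)_{1-1/p,p}\doteq W_{\coW p,\cB}^{2-2/p}(M)$ with the $p$-dependent boundary conditions of \Eqref{A.WB}, and the uniformity of the trace and Lopatinskii--Shapiro theory on the noncompact boundary.
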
 
 The finite time interval~$J$ can be replaced by~$\BR^+$, provided we impose 
 the additional assumption that the spectrum of~$A$ is contained in 
 \hb{[\Re z\geq\ga]} for some 
 \hb{\ga>0}. This can always be achieved by replacing~$A$ by 
 \hb{A+\om} for a sufficiently large 
 \hb{\om>0}. 
 
 \par 
 On the surface, this theorem looks exactly the same as the very classical 
 existence and uniqueness theorem for second order parabolic equations on 
 open subsets of~$\BR^m$ with smooth compact boundary 
 (e.g., O.A. Ladyzhenskaya, V.A. Solonnikov, and 
 N.N. Ural'ceva~\cite[Chapter~IV]{LSU68a} and 
 R.~Denk, M.~Hieber, and J.~Pr{\"u}ss~\cite{DHP03a}). However, it is in fact 
 a rather deep-rooted vast generalization thereof since it applies to any 
 uniformly regular Riemannian manifold. 
 
 \par 
 Closely related to uniformly regular Riemannian manifolds are 
 `singular Riemannian manifolds' which are characterized by~a 
 `singularity function' 
 \hb{\rho\in C^\iy\bigl(M,(0,\iy)\bigr)}. More precisely, let 
 \hb{M=\Mg} be a Riemannian manifold and consider the conformal metric 
 \hb{\hat g:=g/\rho^2} on~$M$. Then the basic requirement for~$M$ to be a 
 singular Riemannian manifold is that 
 \hb{\hat M:=\Mhg} be a uniformly regular Riemannian manifold. In 
 Examples~\ref{exs-S.exS} we present some important instances of singular 
 Riemannian manifolds, most notably the class of \hbox{$m$-dimensional} 
 Riemannian submanifolds of~$\BR^n$ with finitely many cuspidal 
 singularities. 
 
 \par 
 By considering parabolic equations on singular Riemannian manifolds we 
 are naturally led to study degenerate parabolic equations in weighted 
 Sobolev spaces. To be more precise, we now assume that 
 \hb{M=\Mg} is a singular Riemannian manifold and 
 \hb{\rho\in C^\iy\bigl(M,(0,\iy)\bigr)} is a singularity function for 
 it. Then $\cA$~is said to be a \hbox{$\rho$-regular} uniformly 
 \hbox{$\rho$-elliptic} differential operator if $\rho^{-2}a$~is 
 symmetric, uniformly positive definite, and $\rho^{-2}a$ 
 and~$\rho^{-1}\na a$ are bounded and continuous. Note that this means 
 that $\cA$~is no longer uniformly strongly elliptic but that the 
 ellipticity condition degenerates if $\rho$~tends to zero (or to infinity). 
 
 \par 
 For 
 \hb{\lda\in\BR} and 
 \hb{k\in\BN} we define the weighted Sobolev 
 space~$W_{\coW p}^{k,\lda}(M;\rho)$ to be the completion of~$\cD(M)$ 
 in~$L_{1,\loc}(M)$ with respect to the norm 
 $$ 
 u\mt 
 \Bigl(\sum_{j=0}^k\bigl\|\rho^{\lda+j}\,|\na^ju|_{g_j^0} 
 \,\bigr\|_{L_p(M)}^p\Bigr)^{1/p}. 
 $$ 
 Then 
 $$ 
 W_{\coW p}^{0,\lda}(M;\rho)=L_p^\lda(M;\rho) 
 :=\bigl\{\,u\in L_{p,\loc}(M)\ ;\ \rho^\lda u\in L_p(M)\,\bigr\}. 
 $$ 
 If 
 \hb{p\neq2}, then 
 $$ 
 W_{\coW p}^{2-2/p,\lda}(M;\rho) 
 :=\bigl(L_p^\lda(M;\rho),W_{\coW p}^{2,\lda}(M;\rho)\bigr)_{1-1/p,p}. 
 $$ 
 Furthermore, $W_{\coW p,\cB}^{2,\lda}(M;\rho)$ 
 and $W_{\coW p,\cB}^{2-2/p,\lda}(M;\rho)$ are defined analogously to 
 $W_{\coW p,\cB}^2(M)$ and $W_{\coW p,\cB}^{2-2/p}(M)$, resp. Lastly, 
 \hb{W_{\coW p}^2(M;\rho):=W_{\coW p}^{2,0}(M;\rho)}, etc. Note that 
 \hb{L_p^0(M;\rho)=L_p(M)}. 
 
 \par 
 Using this we can now formulate our main result for degenerate parabolic 
 equations in the present setting. 
 \begin{thm}\LabelT{thm-A.D} 
 Let $M$ be a singular Riemannian manifold, $\rho$~a singularity function 
 for it, and 
 \hb{p\notin\{3/2,3\}}. Suppose $\cA$~is \hbox{$\rho$-regular} and 
 \hbox{$\rho$-uniformly} strongly elliptic. Then \Eqref{A.P} has for each 
 $$ 
 (f,u_0)\in L_p\bigl(J,L_p(M)\bigr)\times W_{\coW p}^{2-2/p,2/p}(M;\rho) 
 $$ 
 a unique solution 
 $$ 
 u\in L_p\bigl(J,W_{\coW p,\cB}^2(M;\rho)\bigr) 
 \cap W_{\coW p}^1\bigl(J,L_p(M)\bigr).  
 $$ 
 The map 
 \hb{(f,u_0)\mt u} is linear and continuous.\\ 
 Equivalently: Set 
 \hb{A:=\cA\sn W_{\coW p,\cB}^2(M;\rho)}. Then $-A$~generates a strongly 
 continuous analytic semigroup on~$L_p(M)$ and has the property of 
 maximal regularity. 
 \end{thm}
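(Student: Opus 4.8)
The plan is to remove the degeneracy by passing to the conformally rescaled metric. By the very definition of a singular Riemannian manifold, $\hat M:=\Mhg$ with $\hat g=g/\rho^2$ is uniformly regular, so that Theorem~\ref{thm-A.P} is available on $\hat M$. Since the Riemannian volume elements satisfy $dV_g=\rho^m\,dV_{\hat g}$ with $m=\dim M$, multiplication $T\sco u\mt\rho^{m/p}u$ is an isometric isomorphism from $L_p(M)=L_p(M,dV_g)$ onto $L_p(\hat M)=L_p(M,dV_{\hat g})$. I would use $T$ to transplant the whole problem to $\hat M$, solve it there by Theorem~\ref{thm-A.P}, and pull the result back.

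First I would check that the degenerate operator becomes nondegenerate on $\hat M$. Writing $a=\rho^2\hat a$ with $\hat a:=\rho^{-2}a$ symmetric, uniformly positive definite and $\hat g$-bounded together with its first $\hat g$-covariant derivative (which is exactly the \hbox{$\rho$-regular}, \hbox{$\rho$-uniformly} elliptic hypothesis), a computation with the conformal transformation laws for $\grad$ and $\tdiv$ shows that $\cA u=-\tdiv_g(a\btdot\grad_g u)$ coincides, as a differential operator on $\hat M$, with one whose principal part is $-\tdiv_{\hat g}(\hat a\btdot\grad_{\hat g}\,\cdot\,)$ and whose first order coefficients are assembled from $\grad_{\hat g}\log\rho$ and the $\hat g$-covariant derivative of $\hat a$, all of which are $\hat g$-bounded by the singularity-function and \hbox{$\rho$-regularity} assumptions. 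Conjugating the differential operator by $T$ adds only bounded lower order terms coming from the derivatives of $\rho^{m/p}$, so its realization $\hat A:=T A T^{-1}$ in $L_p(\hat M)$ is again regular uniformly strongly elliptic. The boundary operator transforms in the same spirit: $T$ leaves the Dirichlet condition on $\pl_0M$ Dirichlet, while the conormal condition $\cB_1u=(\nu\sn a\btdot\grad u)$ is carried, up to the positive factor $\rho^{-1}$ and bounded tangential lower order terms, onto the $\hat g$-conormal derivative associated with $\hat a$ on $\pl_1M$. Thus $\hat\cB:=(\hat\cB_0,\hat\cB_1)$ is of the Dirichlet/conormal type covered by Theorem~\ref{thm-A.P}.

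The technical heart of the argument is the identification of the function spaces, which is where I expect the real work to lie. I would prove that $T$ restricts to an isomorphism $W_{\coW p,\cB}^{2,0}(M;\rho)\to W_{\coW p,\hat\cB}^2(\hat M)$ and, more generally, carries the weighted scale on $(M;\rho)$ onto the unweighted scale on $\hat M$. This rests on the defining properties of the singularity function, namely that $\log\rho$ together with all its $\hat g$-covariant derivatives is bounded, so that each factor $\na_{\hat g}^j\rho^{m/p}$ produced by the Leibniz rule is $\hat g$-bounded and the cross terms are absorbed by lower order norms. The delicate point is the initial-data space: by the trace theory underlying maximal regularity the correct space is $\bigl(L_p(M),W_{\coW p,\cB}^{2,0}(M;\rho)\bigr)_{1-1/p,p}$, and the content of the statement is that this coincides with $W_{\coW p}^{2-2/p,2/p}(M;\rho)$, the characteristic weight $2/p$ arising precisely from the interplay of the weight $0$ of the base and domain spaces with the conformal factor $\rho^m$ in $dV_g$. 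Establishing this weighted interpolation identity, for which I would use the isometry $T$ together with the retraction/coretraction description of the weighted spaces, is the main obstacle.

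Granting these identifications, the conclusion is immediate. On $\hat M$ Theorem~\ref{thm-A.P} yields that $-\hat A$ generates a strongly continuous analytic semigroup on $L_p(\hat M)$ with maximal regularity, the initial-data space being $W_{\coW p,\hat\cB}^{2-2/p}(\hat M)=T\,W_{\coW p}^{2-2/p,2/p}(M;\rho)$. Since $T$ is an isometric isomorphism intertwining $A$ with $\hat A$ and the domains $W_{\coW p,\cB}^2(M;\rho)$ with $W_{\coW p,\hat\cB}^2(\hat M)$, both analyticity and maximal regularity, which are invariant under similarity by isomorphisms, transfer back to $-A=-T^{-1}\hat A T$ on $L_p(M)$. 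This gives the asserted unique solvability of \Eqref{A.P} in $L_p\bigl(J,W_{\coW p,\cB}^2(M;\rho)\bigr)\cap W_{\coW p}^1\bigl(J,L_p(M)\bigr)$ together with continuous dependence on $(f,u_0)$.
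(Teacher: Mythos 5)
Your overall route is exactly the paper's own. Theorem~\ref{thm-A.D} is deduced there from Corollary~\ref{cor-D.S}, i.e.\ from Theorem~\ref{thm-D.S} with $\lda=0$, whose proof (via Theorem~\ref{thm-D.P}) consists precisely of rewriting $(\cA,\cB)$ in terms of $\hat g=g/\rho^2$ and conjugating with multiplication by a power of $\rho$; for $\lda=0$ that power is $\rho^{m/p}$, which is your operator $T$, and Lemma~\ref{lem-D.con} is exactly your assertion that the commutator terms produced by the conjugation are admissible lower-order perturbations, the relevant fact being \Eqref{S.log} --- boundedness of $d\log\rho$ in the $\hat g$-metric, not of $\log\rho$ itself, which is unbounded. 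Up to that point your outline is sound and coincides with the paper's argument.

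The genuine gap is the step you yourself call the main obstacle. The claimed identity $\bigl(L_p(M),W_{\coW p,\cB}^{2,0}(M;\rho)\bigr)_{1-1/p,p}\doteq W_{\coW p}^{2-2/p,2/p}(M;\rho)$ is false whenever $\rho\not\sim\mf{1}$, and it cannot be produced by the volume factor: by the paper's own definition the couple $\bigl(L_p^{0}(M;\rho),W_{\coW p}^{2,0}(M;\rho)\bigr)_{1-1/p,p}$ \emph{is} the weight-zero space $W_{\coW p}^{2-2/p,0}(M;\rho)$, and your isometry confirms this rather than contradicting it, since real interpolation commutes with the multiplication isomorphisms $u\mt\rho^{\mu}u$ of \Eqref{S.rho}; applying $T$ entrywise and using \Eqref{S.WW} gives
$$
\bigl(L_p(M),W_{\coW p}^{2,0}(M;\rho)\bigr)_{1-1/p,p}
\doteq T^{-1}\,W_{\coW p}^{2-2/p}(\hat M)
\doteq \rho^{-m/p}\,W_{\coW p}^{2-2/p,-m/p}(M;\rho)
\doteq W_{\coW p}^{2-2/p,0}(M;\rho),
$$
so the factor $\rho^{m}$ in $dv_g=\rho^{m}\,dv_{\hat g}$ is exactly cancelled by $T$ and no residual weight $2/p$ survives (the same computation carries the boundary conditions along). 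If the weight-$2/p$ identity were true, then by \Eqref{S.rho} one would have $W_{\coW p}^{2-2/p,2/p}(M;\rho)\doteq W_{\coW p}^{2-2/p,0}(M;\rho)$, which fails, e.g., near a cusp where $\rho\to0$. This is also what the body of the paper asserts: Corollary~\ref{cor-D.S} gives maximal regularity on $L_p(M)$ with initial-data space $W_{\coW p,\cB}^{2-2/p,0}(M;\rho)=W_{\coW p,\cB}^{2-2/p}(M;\rho)$. In other words, your method, carried out correctly, proves the statement with $u_0\in W_{\coW p,\cB}^{2-2/p}(M;\rho)$; the space $W_{\coW p}^{2-2/p,2/p}(M;\rho)$ appearing in the printed Theorem~\ref{thm-A.D} is inconsistent with Section~\ref{sec-D} (and, by necessity of the trace condition, solvability actually fails for data in the strictly larger weight-$2/p$ space when $\rho$ is not bounded away from $0$ and $\iy$). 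The error here is to invent a justification for that weight instead of flagging the discrepancy and proving the weight-zero version.
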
 
 This is a particular instance of Theorem~\ref{thm-D.S} and its corollary, 
 both of which apply to general weighted spaces, that is, to 
 \hb{\lda\neq0} as well. 
 
 \par 
 We should like to point out that we impose minimal regularity requirements 
 on~$a$ (within the framework of continuous coefficients). This allows 
 to use Theorems \ref{thm-A.P} and~\ref{thm-A.D} (and the more general 
 results below) as a basis for the study of quasilinear equations along 
 well-established lines (e.g., \cite{Ama95a}, \cite{Ama05a}). For the sake 
 of brevity we do not give details in this paper. 
 
 \par 
 It should also be noted that only the behavior of~$\rho$ near zero and 
 infinity is of importance. In other words, if 
 \hb{\tilde\rho\in C^\iy\bigl(M,(0,\iy)\bigr)} satisfies 
 \hb{\tilde\rho\sim\rho}, that is, 
 \hb{\rho/c\leq\tilde\rho\leq c\rho} for some 
 \hb{c\geq1}, then Theorem~\ref{thm-A.D} remains valid with $\rho$ replaced 
 by~$\tilde\rho$. In particular, $W_{\coW p,\cB}^2(M;\tilde\rho)$ 
 equals $W_{\coW p,\cB}^2(M;\rho)$ except for equivalent norms. 
 
 \par 
 Now we illustrate the strength of our results by means of relatively simple 
 examples. For this we assume that $\Om$~is a smooth open subset 
 of~$\BR^m$ with a compact smooth boundary, that is, $\bar\Om$~is a smooth 
 \hbox{$m$-dimensional} submanifold of~$\BR^m$. We also assume that 
 
 \par  
 \hangindent\parindent 
 $\mf{\Ga}$~is a finite family of compact connected smooth 
 submanifolds~$\Ga$ of~$\BR^m$ without boundary and dimension 
 \hb{\ell_\Ga\leq m-1} such that the following applies: 
 $$ 
  \bal
 \rm{(i)}  \qquad    
    &\text{if $\ell_\Ga=m-1$ and $\Ga\cap\pO\neq\es$, then }\Ga\is\pO.\\
 \rm{(ii)} \qquad    
    &\text{if $1\leq\ell_\Ga\leq m-2$, then }\Ga\is\Om.
 \eal 
 $$
  
 \par  
 \hangindent=0cm 
 \noindent 
 Then 
 \hb{M:=\bar\Om\ssm\bigcup\{\,\Ga\ ;\ \Ga\in\mf{\Ga}\,\}}, endowed with the 
 Euclidean metric, is an \hbox{$m$-dimensional} Riemannian submanifold 
 of~$\BR^m$ whose boundary~$\pl M$ equals 
 \hb{\pO\ssm\bigcup\{\,\Ga\ ;\ \Ga\in\mf{\Ga}\,\}}. 
 
 \par 
 For each 
 \hb{\Ga\in\mf{\Ga}} and 
 \hb{x\in M} we denote by~$\da_\Ga(x)$ the (Euclidean) distance from~$x$ 
 to~$\Ga$. Then $\da_\Ga$~is, sufficiently close to~$\Ga$, a~well-defined 
 strictly positive smooth function. If $M$~contains a neighborhood of 
 infinity in~$\BR^m$, that is, if $\Om$~is an exterior domain, then we put 
 \hb{\da_\iy(x):=|x|} with the Euclidean norm~%
 \hb{\vsdot} in~$\BR^m$. We also fix 
 \hb{\al_\Ga\geq1} and 
 \hb{\al_\iy\in(-\iy,0)}. Then we choose a function 
 \hb{\rho\in C^\iy\bigl(M,(0,\iy)\bigr)} satisfying 
 \hb{\rho\sim\da_\Ga^{\al_\Ga}} near 
 \hb{\Ga\in\mf{\Ga}}, 
 \ \hb{\rho\sim\da_\iy^{\al_\iy}} near infinity if $\Om$~is an exterior 
 domain, and 
 \hb{\rho\sim\mf{1}} away from the `singularity set' 
 \hb{\cS(M):=\bigcup\{\,\Ga\ ;\ \Ga\in\mf{\Ga}\,\}} and infinity. Then 
 $M$~is a singular Riemannian manifold characterized by the singularity 
 function~$\rho$.  Indeed, see Examples~\ref{exs-S.exS}, each 
 \hb{\Ga\in\mf{\Ga}} is an 
 $(\al_\Ga,\ell_\Ga)$-wedge and 
 \hb{\{\,x\in M\ ;\ |x|>R\,\}} is for sufficiently large 
 \hb{R>1} diffeomorphic to an infinite \hbox{$\al_\iy$-cusp} (over~$S^{m-1}$ 
 in~$\BR^{m+1}$) if $\Om$~is an exterior domain. Thus Theorem~\ref{thm-A.D} 
 applies to this situation. 
 
 \par 
 Next we consider some particularly simple subcases which have been treated 
 before in the literature. 
 
 \par 
 (a) Suppose $\Om$~is bounded and 
 \hb{\cS(M)=\pO}. Thus 
 \hb{\rho\sim\da^\al} for some 
 \hb{\al\geq1}, where $\da$~is the distance to~$\pO$. In this situation it is 
 shown by V.~Vespri~\cite{Ves89b} that $A$~generates an analytic semigroup on 
 \hb{L_p(\Om)=L_p(M)}. Recently, S.~For\-naro, G.~Metafune, and 
 D.~Pallara~\cite{FMP11a} have given a new proof for this generation theorem. 
 
 \par 
 (b) Let $\Om$ be bounded and 
 \hb{\ell_\Ga=0} for each 
 \hb{\Ga\in\mf{\Ga}}. Then $\cS(M)$~consists of finitely many one-point sets  
 \hb{\{x_0\},\ldots,\{x_k\}} lying either in~$\Om$ or on~$\pO$. We set 
 \hb{\da_j(x):=|x-x_j|} for 
 \hb{0\leq j\leq k} and 
 \hb{x\in M=\bar\Om\bssm\bigcup_{j=0}^k\{x_j\}}. Assume 
 \hb{\al_j\geq1} for 
 \hb{0\leq j\leq k}. Then Theorem~\ref{thm-A.D} implies that, given any 
 \hb{\rho\in C^\iy\bigl(M,(0,\iy)\bigr)} satisfying 
 \hb{\rho\sim\da_j^{\al_j}} near~$x_j$ and 
 \hb{\rho\sim1} otherwise, 
 \hb{-A=-\cA\sn W_{\coW p,\cB}^2(M;\rho)} generates a strongly continuous 
 analytic semigroup on 
 \hb{L_p(M)=L_p(\Om)} and has the property of maximal regularity. 
 
 \par 
 The only paper known to the author treating the problem of semigroup 
 generation by parabolic equations with strong degeneracies at isolated 
 points is the recent publication of G.~Fragnelli, G.~Ruiz Goldstein, J.A. 
 Goldstein, and S.~Romanelli~\cite{FGGR12a}. These authors consider the 
 case where 
 \hb{\Om=(0,1)} and 
 \hb{\cS(M)=\{x_0\}\is\Om} and show that $-A$~generates an analytic 
 semigroup on~$L_2(\Om)$. 
 
 \par 
 In none of the above papers it is shown that the maximal regularity 
 property prevails. Furthermore, the proofs given there depend 
 significantly on the fact that second order equations are being considered. 
 In contrast, our approach does not depend on the particular structure of 
 the problem but applies equally well to systems and higher order equations 
 (cf.~H.~Amann~\cite{Ama13a}). 
 
 \par 
 Observe that the preceding examples show that a given Riemannian manifold 
 can possess uncountably many non-equivalent singular structures. This is 
 related to and sheds new light on the non-uniqueness results observed by 
 M.A. Pozio, F.~Punzo, and A.~Tesei~\cite{PPT08a}. Thus, besides being 
 rather general and widely applicable, our approach to highly degenerate 
 parabolic problems via Riemannian manifolds leads to a deeper understanding 
 of such problems as well. 
 
 \par 
  In the next section we give the precise definition of a uniformly regular 
 Riemannian manifold. Then we formulate our main result, 
 Theorem~\ref{thm-P.I.P}, in the setting of second order equations 
 and trace it back to the much more general propositions in~\cite{Ama13a}. 
 Note that, besides allowing lower order terms, we prove an optimal 
 regularity theorem in the presence of nonhomogeneous boundary conditions. 
 In addition, we show that we get classical solutions if we impose slightly 
 stronger regularity assumptions on the data. 
 
 \par 
 Singular Riemannian manifolds are precisely defined in Section~\ref{sec-S} 
 and basic examples are presented. Furthermore, weighted function spaces are 
 introduced and their interrelation with non-weighted 
 Sobolev-Slobodeckii spaces on uniformly regular manifolds is established. 
 
 \par 
 Section~\ref{sec-D} contains our main theorem for second order degenerate 
 parabolic problems involving lower order terms and nonhomogeneous 
 boundary conditions. We attract the reader's attention to 
 Theorem~\ref{thm-D.S} where it is shown that problems with homogeneous 
 boundary conditions give rise to generators of analytic semigroups 
 possessing the property of maximal regularity in general weighted 
 spaces~$L_p^\lda(M;\rho)$ for any 
 \hb{\lda\in\BR}. This generalizes results by V.~Barbu, A.~Favini, and 
 S.~Romanelli~\cite{BFR96a}, for example, where the case 
 \hb{M=\Om}, with~$\Om$ a bounded domain on~$\BR^m$, 
 \ \hb{\cS(M)=\pO}, and 
 \hb{\lda=-1} is considered (see also~\cite{FGGR12a}). 
 
 \par 
 For the reader's convenience there is included an appendix in which some 
 basic facts on tensor bundles over Riemannian manifolds are listed. 
 \section{Function Spaces and Uniformly Regular Manifolds\Label{sec-F}}
 By~a \emph{manifold} we always mean a smooth, that is, $C^\iy$~manifold
 with (possibly empty) boundary such that its underlying topological space
 is separable and metrizable. Thus, in the context of manifolds,  we work in
 the smooth category. A~manifold does not need to be connected, but all 
 connected components are of the same dimension. 
 
 \par
 Let 
 \hb{M=\Mg} be a Riemannian manifold with boundary~$\pl M$ and volume 
 measure~$dv$. The 
 metric~$g$ on~$TM$ gives rise to a vector bundle metric on the tensor 
 bundle 
 \hb{V_{\coV\tau}^\sa:=T_\tau^\sa M} for 
 \hb{\sa,\tau\in\BN}, which we denote by~$g_\sa^\tau$ 
 (see the appendix for more details). In particular, 
 \hb{g_1^0=g} and 
 \hb{g_0^1=g^*}, the adjoint (or contravariant) metric on the cotangent 
 bundle~$T^*M$. 
 
 \par 
 For 
 \hb{k\in\BN} the vector space of all 
 \hb{(\sa,\tau)}-tensor fields of class~$C^k$, that is, of all 
 \hbox{$C^k$~sections} of~$V_{\coV\tau}^\sa$, is denoted by 
 $C^k(V_{\coV\tau}^\sa)$. The Levi-Civita covariant  
 derivative,~$\na$, satisfies 
 \hb{\na^ka\in C(V_{\coV\tau+k}^\sa)} for 
 \hb{a\in C^k(V_{\coV\tau}^\sa)}, where 
 \hb{\na^0a:=a}. We write~$\cD(V_{\coV\tau}^\sa)$ for 
 the space of all smooth sections with compact support in~$M$ (which may 
 meet the boundary). As usual, $C^k(M)$ stands for $C^k(V_0^0)$, etc. 
 
 \par 
 We fix $\sa$ and~$\tau$ and set 
 \hb{V:=V_{\coV\tau}^\sa}. Then, given 
 \hb{k\in\BN}, we denote by~$W_{\coW p}^k(V)$ the Sobolev 
 space of order~$k$, defined to be the completion of~$\cD(V)$ in 
 \hb{L_{1,\loc}(V)=L_{1,\loc}(V,dv)} with respect to the norm 
 $$ 
 u\mt 
 \Bigl(\sum_{j=0}^k\bigl\|\,|\na^ju|_{g_\sa^{\tau+j}} 
 \,\bigr\|_{L_p(V)}^p\Bigr)^{1/p}. 
 \npb 
 $$ 
 Thus 
 \hb{W_{\coW p}^0(V)=L_p(V)}. 
 
 \par 
 We also need fractional order Sobolev spaces, namely the Slobodeckii 
 spaces~$W_{\coW p}^s(V)$, for 
 \hb{s\in\BR^+\ssm\BN}. If 
 \hb{k<s<k+1} with 
 \hb{k\in\BN}, then 
 \beq
 \Label{F.I.Slo} 
 W_{\coW p}^s(V) 
 :=\bigl(W_{\coW p}^k(V),W_{\coW p}^{k+1}(V)\bigr)_{s-k,p},
 \eeq 
 which is the interpolation space between $W_{\coW p}^k(V)$ 
 and~$W_{\coW p}^{k+1}(V)$ obtained by means of the real interpolation 
 method with exponent 
 \hb{s-k} and integrability parameter~$p$. 
 
 \par 
 We denote by $B(V)$ the space of all bounded sections of~$V$. It is a Banach 
 space with the norm 
 \hb{u\mt\|u\|_\iy:=\big\|\,|u|\,\big\|_\iy}, where 
 \hb{\Vsdot_\iy} is the maximum norm. Moreover, 
 \hb{BC(V):=B(V)\cap C(V)} is a closed linear subspace thereof. For 
 \hb{k\in\BN} we write $BC^k(V)$ for the linear subspace of~$C^k(V)$ 
 consisting of all~$u$ satisfying 
 \hb{\na^ju\in B(V_{\coV\tau+j}^\sa)} for 
 \hb{0\leq j\leq k}. It is a Banach space with the obvious norm. Moreover, 
 \hb{BC^\iy(V):=\bigcap_kBC^k(V)} and $bc^k(V)$~is the closure 
 of $BC^\iy(V)$ in~$BC^k(V)$. Now we define \emph{Besov-H\"older 
 spaces}~$B_\iy^s(V)$ for 
 \hb{s>0} by 
 \beq\Label{F.B1} 
 B_\iy^s(V):=
 \left\{ 
 \bal 
 &\bigl(bc^k(V),bc^{k+1}(V)\bigr)_{s-k,\iy},
  &\quad k<s    &<k+1,\\ 
 &\bigl(bc^k(V),bc^{k+2}(V)\bigr)_{1/2,\iy},
  &\quad    s   &=k+1, 
 \eal 
 \right. 
 \npb 
 \eeq 
 where 
 \hb{k\in\BN}. 
 
 \par 
 Besides these isotropic spaces we also need anisotropic versions 
 adapted to parabolic problems. Anisotropic Sobolev-Slobodeckii spaces are 
 introduced for 
 \hb{s\in\BR^+=[0,\iy)} by 
 $$ 
 W_{\coW p}^{(s,s/2)}(V\times J) 
 :=L_p\bigl(J,W_{\coW p}^s(V)\bigr) 
 \cap W_{\coW p}^{s/2}\bigl(J,L_p(V)\bigr). 
 $$ 
 The second space on the right is a standard 
 Sobolev-Slobodeckii space of Banach space valued distributions on~$\ci J$. 
 Of course, 
 \hb{W_{\coW p}^{(0,0)}(V\times J)} is naturally identified with 
 \hb{L_p(V\times J)=L_p\bigl(V\times J,dvdt\bigr)}. 
 
 \par 
 Analogously, we define anisotropic Besov-H\"older spaces for 
 \hb{s>0} by  
 $$ 
 B_\iy^{(s,s/2)}(V\times J) 
 :=B\bigl(J,B_\iy^s(V)\bigr)\cap B_\iy^{s/2}\bigl(J,B(V)\bigr). 
 $$ 
 Here the second space on the right is a standard H\"older space 
 $C^{s/2}\bigl(J,B(V)\bigr)$ if 
 \hb{s\notin2\BN}, and a Zygmund space for 
 \hb{s\in2\BN}, of Banach space valued functions on~$J$ 
 (see A.~Lunardi~\cite{Lun95a}, for example). For 
 \hb{k\in\BN^\times:=\BN\ssm\{0\}} we put 
 $$ 
 BC^{(k,k/2)}(V\times J) 
 :=C\bigl(J,BC^k(V)\bigr)\cap C^{k/2}\bigl(J,B(V)\bigr),  
 \npb 
 $$ 
 recalling that $J$~is compact. 
 
 \par 
 Although Sobolev-Slobodeckii spaces, respectively Besov-H\"older spaces, 
 are well-defined for each 
 \hb{s\in\BR^+}, respectively 
 \hb{s>0}, they are not too useful on general Riemannian manifolds 
 since, for example, the fundamental Sobolev type embedding theorems may 
 not hold in general. Even more importantly, there may be no characterization 
 by local coordinates. For this reason we restrict ourselves to the class of 
 \emph{uniformly regular Riemannian manifolds}. Loosely speaking, $M$~is a 
 uniformly regular Riemannian manifold if its differentiable structure is 
 induced by an atlas~$\gK$ of 
 finite multiplicity whose coordinate patches are all of comparable size, 
 such that $\gK$ can be uniformly shrunk to an atlas for~$M$, 
 and the family of all charts in~$\gK$ which intersect~$\pl M$ induces an 
 atlas of the same type for~$\pl M$. In particular, 
 \hb{\pl M=(\pl M,\thg)}, where $\thg$~is the Riemannian metric induced 
 by~$g$ on~$\pl M$, is a uniformly regular 
 \hb{(m-1)}-dimensional Riemannian manifold if $M$~is uniformly regular 
 (see Example~\ref{exs-S.exS}(b)). 
 
 \par 
 For the precise definition of a uniformly regular Riemannian manifold 
 we introduce some notation and conventions. By~$c$ we denote constants
 \hb{\geq1} whose numerical value may vary from occurrence to occurrence;
 but $c$~is always independent of the free variables in a given formula,
 unless a dependence is explicitly indicated. 
 
 \par 
  We denote by~$\BH^m$  the closed right half-space
 \hb{\BR^+\times\BR^{m-1}} in~$\BR^m$, where
 \hb{\BR^0=\{0\}}. The Euclidean metric on~$\BR^m$,
 \hb{(dx^1)^2+\cdots+(dx^m)^2}, is denoted by~$g_m$. The same symbol is used
 for its restriction to an open subset~$U$ of $\BR^m$ or~$\BH^m$, that is,
 for~$\ia^*g_m$, where
 \hb{\ia\sco U\hr\BR^m} is the natural embedding. Here and below, we employ
 standard definitions of pull-back and push-forward operations. 

 \par 
 On the space of all nonnegative functions, defined on some nonempty set 
 whose specific form will be clear in any given situation, 
 we introduce an equivalence relation~%
 \hb{{}\sim{}} by setting
 \hb{f\sim g} iff there exists
 \hb{c\geq1} such that
 \hb{f/c\leq g\leq cf}. Inequalities between vector bundle metrics have to 
 be understood in the sense of quadratic forms. By~$\mf{1}$ we denote the 
 constant function 
 \hb{s\mt1}, whose domain will always be clear from the context. 
 
 \par
 We set
 \hb{Q:=(-1,1)\is\BR}. If $\ka$~is a local chart for an
 \hbox{$m$-dimensional} manifold~$M$, then we write~$U_{\coU\ka}$ for the
 corresponding coordinate patch~$\dom(\ka)$.
  A~local chart~$\ka$ is \emph{normalized} (at~$q$) if
 \hb{\ka(U_{\coU\ka})=Q^m} whenever
 \hb{U_{\coU\ka}\is\ci{M}}, the interior of~$M$, whereas
 \hb{\ka(U_{\coU\ka})=Q^m\cap\BH^m} if
 \hb{U_{\coU\ka}\cap\pl M\neq\es} (and 
 \hb{\ka(q)=0}). We put
 \hb{Q_\ka^m:=\ka(U_{\coU\ka})} if $\ka$~is normalized.
 
 \par 
 An atlas~$\gK$ for~$M$ has \emph{finite multiplicity} if there exists
 \hb{k\in\BN} such that any intersection of more than $k$ coordinate
 patches is empty. In this case 
 $$ 
 \gN(\ka):=\{\,\tk\in\gK 
 \ ;\ U_{\coU\tk}\cap U_{\coU\ka}\neq\es\,\}
 $$
 has cardinality~%
 \hb{\leq k} for each 
 \hb{\ka\in\gK}. An atlas is \emph{uniformly shrinkable} if it consists of
 normalized charts and there exists
 \hb{r\in(0,1)} such that the family 
 \hb{\big\{\,\ka^{-1}(rQ_\ka^m)\ ;\ \ka\in\gK\,\big\}} is a cover of~$M$. 
 We put 
 \hb{\gK_S:=\{\,\ka\in\gK\ ;\ U_{\coU\ka}\cap S\neq\es\,\}} for any 
 nonempty subset~$S$ of~$M$. 

 \par
 Given an open subset~$X$ of $\BR^m$ or~$\BH^m$ and a Banach space~$\cX$, 
 we write
 \hb{\Vsdot_{k,\iy}} for the usual norm of $BC^k\XcX$, the Banach space of
 all
 \hb{u\in C^k\XcX} such that $|\pa u|_\cX$~is uniformly bounded for
 \hb{\al\in\BN^m} of length at most~$k$. 

 \par
 An atlas~$\gK$ for~$M$ is \emph{uniformly regular} if 
 \beq\Label{F.S.K} 
 \bal
 \rm{(i)}  \qquad    &\gK\text{ is uniformly shrinkable and has finite
                      multiplicity.}\\
 \rm{(ii)} \qquad    &\|\tk\circ\ka^{-1}\|_{k,\iy}\leq c(k),
                     \ \ \ka,\tk\in\gK,
                     \ \ k\in\BN.\\
 \eal 
 \eeq 
 In (ii) and in similar situations it is understood that only 
 \hb{\ka,\tk\in\gK} with 
 \hb{U_{\coU\ka}\cap U_{\coU\tk}\neq\es} are being considered. Two 
 uniformly regular atlases $\gK$ and~$\tilde\gK$ are \emph{equivalent}, 
 \hb{\gK\approx\tilde{\gK}}, if 
 \beq\Label{F.S.Keq} 
 \kern-1pt  
 \bal
 \rm{(i)}  \qquad    &\card\{\,\tk\in\tilde{\gK}
                      \ ;\ U_{\coU\tk}\cap U_{\coU\ka}\neq\es\,\}\leq c,
                      \ \ \ka\in\gK;\\
 \rm{(ii)}\qquad    &\|\tk\circ\ka^{-1}\|_{k,\iy}
                     +\|\ka\circ\tk^{-1}\|_{k,\iy}\leq c(k), 
                      \ \ \ka\in\gK, 
                      \ \ \tk\in\tilde{\gK}, 
                      \ \ k\in\BN. 
 \eal
 \kern-1pt  
 \eeq 
 A~uniformly regular structure is a maximal family of equivalent uniformly 
 regular atlases. A~\hh{uniformly regular manifold} is a manifold endowed 
 with a uniformly regular structure. Clearly, on such a manifold all local 
 charts, atlases, etc.\ under consideration belong to its uniformly regular 
 structure. An \hbox{$m$-dimensional} Riemannian manifold $\Mg$ is~a 
 \hh{uniformly regular Riemannian manifold} if 
 \beq\Label{F.S.Rr} 
 \bal
 \rm{(i)}  \qquad    &M\text{ is uniformly regular};\\
 \rm{(ii)} \qquad    &\ka_*g\sim g_m,
                      \ \ \ka\in\gK;\\
 \rm{(iii)}\qquad    &\|\ka_*g\|_{k,\iy}\leq c(k),
                      \ \ \ka\in\gK,
                      \ \ k\in\BN,  
 \eal 
 \npb 
 \eeq 
 for some uniformly regular atlas~$\gK$ for~$M$. 

 \par 
 Let $M$ be a uniformly regular Riemannian manifold. Then 
 the Sobolev-Slobodeckii and Besov-H\"older space scales possess all the 
 properties known to hold in the case of the \hbox{$m$-dimensional} 
 Euclidean space or half-space. In other words, there are embedding, 
 interpolation, and trace theorems for $W_{\coW p}^s(M)$ and 
 $W_{\coW p}^{(s,s/2)}(M\times J)$ which are 
 completely analogous to the corresponding theorems for the classical 
 Sobolev-Slobodeckii spaces. In particular, the anisotropic Sobolev-Morrey 
 type embedding theorem 
 \beq\Label{F.Sob} 
 W_{\coW p}^{(s,s/2)}(V\times J)\hr B_\iy^{(t,t/2)}(V\times J) 
 \qa s+(m+2)/p>t>0, 
 \eeq 
 is valid. In addition, $W_{\coW p}^s(V)$ and~$B_\iy^s(V)$ can be 
 characterized by means of local coordinates, similarly as in the case of 
 compact manifolds. 
 
 \par 
 The spaces $BC^k(V)$ and~$BC^{(k,k/2)}(V)$ do not 
 belong to either one of these scales. However, they can be arbitrarily well 
 approximated by Besov-H\"older spaces. In fact, given 
 \hb{k\in\BN^\times}, 
 \beq\Label{F.B2} 
 B_\iy^{(s_1,s_1/2)}(V\times J)\hr BC^{(k,k/2)}(V\times J) 
 \hr B_\iy^{(s_0,s_0/2)}(V\times J) 
 \eeq 
 for 
 \hb{0<s_0<k<s_1}. Note that this implies a corresponding assertion for the 
 isotropic spaces $BC^k(V)$ and~$B_\iy^s(V)$, since $BC^k(V)$~is naturally 
 identified with the closed linear subspace of $BC^{(k,k/2)}(V)$ of all 
 `time-independent' functions therein, etc. 
 
 \par 
 Proofs, further results, references to related research, and many more 
 details~---~in particular spaces of sections of general uniformly regular 
 vector bundles over~$M$~---~are found in the earlier work 
 \cite{Ama12b}, \cite{Ama12c} of the author (also see \cite{Ama13a}, 
 \cite{Ama13c}, as well as \cite{Ama09a}). 
 \section{Parabolic Problems on Uniformly Regular Riemannian 
 Manifolds\Label{sec-P}} 
 Let 
 \hb{M=\Mg} be a uniformly regular Riemannian manifold. We consider 
 parabolic initial boundary value problems of the form 
 \beq \Label{P.I.P} 
 \pl u+\cA u=f\text{ on }M_T 
 \qb \cB u=h\text{ on }\pl M_T 
 \qb u=u_0\text{ on }M_0. 
 \eeq  
 
 \par 
 In order to reduce the technical apparatus to a minimum we restrict 
 ourselves to the important class of second order divergence form 
 problems. Thus we fix 
 \hb{\da\in C\bigl(\pl M,\{0,1\}\bigr)} and set 
 \hb{\pl_jM:=\da^{-1}(j)} for 
 \hb{j\in\{0,1\}}. Then 
 \hb{\pl M=\pl_0M\cup\pl_1M} and 
 \hb{\pl_0M\cap\pl_1M=\es}. We assume that $\AB$~is of the form 
 \beq \Label{P.I.A} 
 \cA u:=-\tdiv(a\btdot\grad u)+(\vec a\sn\grad u)+a_0u 
 \eeq 
 and 
 $$ 
 \cB u:=
 \left\{ 
 \bal 
 &\cB_0u    &&\quad \text{on }\pl_0M_T,\\ 
 &\cB_1u    &&\quad \text{on }\pl_1M_T,\\ 
 \eal 
 \right. 
 $$ 
 where 
 $$ 
 \cB_0u:=\ga u 
 \qb \cB_1u:=\bigl(\nu\bsn\ga(a\btdot\grad u)\bigr)+b_0\ga u. 
 $$ 
 Here 
 \hb{\prsn=\prsn_g:=g\pr}, \ $\nu$~is the (inward pointing) unit normal 
 on~$\pl M$, \ $\ga$~the trace map for~$\pl M$, and 
 \hb{{}\btdot{}}~denotes complete contraction (see the appendix). More 
 precisely, 
 \hb{\cB_0u=(\ga u)\sn\pl_0M}, etc. We suppose 
 \hb{a\in C^1(T_1^1M\times J)}, \ $\vec a$~is a time-dependent vector field, 
 \ $a_0$ a~function on~$M_T$, and $b_0$ one on ~$\pl_1M_T$. In local 
 coordinates, 
 $$ 
 \cA u=-\frac1{\sqrt{g}}\,\pl_i\bigl(\sqrt{g}\,a_j^ig^{jk}\pl_ku\bigr) 
 +a^i\pl_iu+a_0u. 
 $$ 
 Hence $\cB$~is the Dirichlet boundary operator on~$\pl_0M$ and the 
 Neumann or a Robin boundary operator on~$\pl_1M$. Note that either 
 $\pl_0M$ or~$\pl_1M$ may be empty. We also allow~$M$ to be a manifold 
 without boundary. In this case it is understood throughout the whole 
 paper that all statements, assumptions, and formulas referring explicitly 
 or implicitly to~$\pl M$ are to be unconsidered. For example, 
 problem~\Eqref{P.I.P} reduces to the Cauchy problem 
 $$ 
 \pl u+\cA u=f\text{ on }M_T 
 \qb u=u_0\text{ on }M_0 
 \npb 
 $$ 
 if 
 \hb{\pl M=\es}. 
 
 \par 
 A~function~$u$ satisfying \Eqref{P.I.P} is a \emph{strong} 
 \hbox{$L_p$~\emph{solution}} if it belongs to $W_{\coW p}^{(2,1)}(M_T)$, 
 and~a \emph{classical solution} if it is a member of $BC^{(2,1)}(M_T)$. 
 
 \par 
 The differential operator~$\cA$ is \emph{uniformly strongly elliptic 
 on}~$M_T$ if $a(\cdot,t)$ is symmetric and uniformly positive definite, 
 uniformly with respect to 
 \hb{t\in J}. Clearly, the latter means that there exists a constant 
 \hb{\ve>0} such that 
 $$ 
 \bigl(a(q,t)\btdot X\bsn X\bigr)_{g(q)}\geq\ve\,|X|_{g(q)}^2 
 \qa X\in T_qM 
 \qb q\in M 
 \qb t\in J. 
 $$ 
 
 \par 
 For a concise formulation of the main result we introduce for 
 \hb{s\geq0} the boundary data spaces 
 $$ 
 \bal 
 &W_{\coW p}^{(s+2-\vec\da-1/p)(1,1/2)}(\pl M_T)\\ 
 &\qquad\qquad{} 
  :=W_{\coW p}^{(s+2-1/p)(1,1/2)}(\pl_0M_T) 
  \times W_{\coW p}^{(s+1-1/p)(1,1/2)}(\pl_1M_T), 
 \eal 
 $$ 
 whose general point is written 
 \hb{h=(h_0,h_1)}. Obvious interpretations apply if either $\pl_0M$ 
 or~$\pl_1M$ is empty. The \emph{total data spaces} are then 
 $$ 
 \bal 
 &\cW_{\coW p}^{(s+2,(s+2)/2)}(M_T)\\ 
 &\qquad{} 
  :=W_{\coW p}^{(s,s/2)}(M_T) 
  \times W_{\coW p}^{(s+2-\vec\da-1/p)(1,1/2)}(\pl M_T) 
  \times W_{\coW p}^{s+2-2/p}(M_0) 
 \eal 
 \npb 
 $$ 
 for 
 \hb{s\geq0}. 
 
 \par 
 Given Banach spaces $E$ and~$F$, we denote by $\cL\EF$ the Banach space 
 of bounded linear operators from~$E$ into~$F$. We write $\Lis\EF$ for the 
 subset of all bijections in~$\cL\EF$. Banach's homomorphism theorem 
 guarantees that 
 \hb{A^{-1}\in\cL\FE} if 
 \hb{A\in\Lis\EF}. 
 
 \par 
 Now we can formulate the main existence and uniqueness theorem for 
 problem~\Eqref{P.I.P}. 
 \begin{thm}\LabelT{thm-P.I.P} 
 Let $M$ be a uniformly regular Riemannian manifold and let 
 \hb{p\notin\{3/2,3\}}. Suppose 
 \beq\Label{P.a} 
 \bal 
 \begin{minipage}{300pt}
 $$ 
 a\in BC^{(1,1/2)}(T_1^1M\times J) 
 \qb \vec a\in L_\iy(TM\times J), 
 $$ 
 \end{minipage}\\  
 \begin{minipage}{300pt}
 $$ 
 a_0\in L_\iy(M_T) 
 \qb b_0\in BC^{(1,1/2)}(\pl_1M_T), 
 $$ 
 \end{minipage} 
 \eal 
 \kern-20pt 
 \eeq 
 and $\cA$~is uniformly strongly elliptic. Denote by 
 $\cW_{p,\cc}^{(2,1)}(M_T)$ the vector space of all 
 \hb{(f,h,u_0)\in\cW_p^{(2,1)}(M_T)} satisfying 
 the compatibility conditions of order zero: 
 \beq \Label{P.I.CC} 
 \bal 
 \ga u_0        &=h_0(\cdot,0)  &\text{ on }    &\pl_0M 
 &&\quad\text{if\/ }    &3/2<p  &<3,\\  
 \cB(\cdot,0)u_0&=h(\cdot,0)    &\text{ on }    &\pl M 
 &&\quad\text{if\/ }    &p      &>3. 
 \eal 
 \eeq  
 Then $\cW_{p,\cc}^{(2,1)}(M_T)$ is closed in $\cW_p^{(2,1)}(M_T)$ and 
 \beq \Label{P.WW} 
 (\pl+\cA,\ \cB,\ga_0) 
 \in\Lis\bigl(W_{\coW p}^{(2,1)}(M_T),\cW_{p,\cc}^{(2,1)}(M_T)\bigr). 
 \eeq 
 \end{thm}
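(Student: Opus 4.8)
The plan is to prove the isomorphism assertion \Eqref{P.WW} by localizing the problem through the uniformly regular atlas, solving the resulting model problems on the half-space by the classical maximal $L_p$-regularity theory, and then reassembling the pieces by a retraction--coretraction scheme. Throughout, the defining conditions \Eqref{F.S.K}--\Eqref{F.S.Rr} of uniform regularity are precisely what guarantee that every constant occurring in the local estimates can be chosen independently of the chart, so that the finite multiplicity of $\gK$ permits summing the local contributions without loss. Write $L:=(\pl+\cA,\cB,\ga_0)$ for the operator under study.

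First I would fix a uniformly regular atlas $\gK$ together with a subordinate localization system: a partition of unity $\{\pi_\ka\}_{\ka\in\gK}$ and cut-off functions $\{\chi_\ka\}$, all of whose covariant derivatives are bounded uniformly in $\ka$ (such a system exists by uniform shrinkability). The local-coordinate characterization of the Sobolev--Slobodeckii scale recalled in Section~\ref{sec-F} then exhibits $W_{\coW p}^{(2,1)}(M_T)$ as a retract of the $\ell_p$-sum $\bigoplus_\ka W_{\coW p}^{(2,1)}(Q_\ka^m\times J)$, with retraction $u\mapsto(\ka_*(\pi_\ka u))_\ka$ and coretraction given by summation against $\{\chi_\ka\}$; finite multiplicity makes both maps bounded. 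Under this scheme $L$ is conjugated to an operator on the $\ell_p$-sums that is block-diagonal up to commutator terms. These commutators arise solely from differentiating $\pi_\ka$ and $\chi_\ka$, are therefore of order one lower than the principal part, and by the trace and interpolation theorems of Section~\ref{sec-F} map into spaces with a genuine gain of regularity.

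For each $\ka$ the transported problem lives on $Q_\ka^m\times J$, where by \Eqref{F.S.Rr}(ii) one has $\ka_*g\sim g_m$ and by \Eqref{P.a} the coefficients of the pushed-forward operator lie in $BC^{(1,1/2)}$ with bounds uniform in $\ka$, while uniform strong ellipticity is preserved with the same constant $\ve$. I would extend each local problem to the full half-space $\BH^m$ (or to $\BR^m$ for interior charts), keeping uniformly continuous coefficients, the same ellipticity, and a common modulus of continuity, and then invoke the maximal $L_p$-regularity theorem for second order parabolic boundary value problems with continuous coefficients satisfying the Dirichlet and Robin (Lopatinskii--Shapiro) conditions (\cite{DHP03a}; see also \cite{LSU68a}). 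This yields a two-sided inverse on each patch whose operator norm is controlled solely through $\ve$ and the uniform data in \Eqref{P.a}. Assembling the local inverses through the coretraction gives, on one hand, the global a priori estimate $\|u\|_{W_{\coW p}^{(2,1)}(M_T)}\le c\,\|Lu\|_{\cW_p^{(2,1)}(M_T)}$ (obtained by summing the uniform local estimates, finite multiplicity keeping $c$ finite), which already forces injectivity and closed range; and, on the other, a right parametrix $R$ with $LR=I+K$, where $K$ is built only from the commutators $[\cA,\pi_\ka]$, $[\cB,\pi_\ka]$ and is hence of lower order. Since such $K$ has arbitrarily small norm on short time intervals, a Neumann series inverts $I+K$ on each member of a finite partition of $J$, producing a right inverse and thus surjectivity onto $\cW_{p,\cc}^{(2,1)}(M_T)$; combined with closedness of the target, \Eqref{P.WW} follows.

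The main obstacle is precisely the \emph{uniformity} of all local constants across the noncompact manifold: each estimate obtained on a single patch must be shown to depend on $\ka$ only through $\ve$ and the $BC^{(1,1/2)}$-norms of \Eqref{P.a}, for otherwise the $\ell_p$-summation diverges. This is where the full force of \Eqref{F.S.K}--\Eqref{F.S.Rr} enters, together with the uniform equivalence $\ka_*g\sim g_m$ and the bounded finite multiplicity that caps the number of overlapping charts. A subsidiary point, handled first and independently, is the necessity of the compatibility conditions \Eqref{P.I.CC} and the closedness of $\cW_{p,\cc}^{(2,1)}(M_T)$ in $\cW_p^{(2,1)}(M_T)$: by the anisotropic trace theorem of Section~\ref{sec-F}, for $p>3/2$ the relevant lateral and initial traces of a $W_{\coW p}^{(2,1)}$-function exist and depend continuously on the data, so the defining equations of $\cW_{p,\cc}^{(2,1)}(M_T)$ are continuous constraints and the range of $L$ is forced to lie in this closed subspace.
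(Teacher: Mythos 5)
Your argument is mathematically sound as a sketch, but it is a genuinely different route from the paper's. The paper does not localize at all: its proof consists of rewriting the divergence-form operator in covariant form, \hb{\cA=a_2\btdot\na^2+a_1\btdot\na+a_0} with \hb{a_2=-a^\sh}, \hb{a_1=\vec a-\tdiv(a^\sh)}, and \hb{\cB_1=b_1\btdot\ga\na+b_0\ga} with \hb{b_1=\nu_\flat\btdot\ga a^\sh}, then verifying that these coefficients have the regularity required by the main theorem of~\cite{Ama13a} --- the only nontrivial points being \hb{\nu_\flat\in BC^\iy(T^*M)} and, for the H\"older-regularity case, the pointwise multiplier theorem of~\cite{Ama12c} --- and finally noting that uniform strong ellipticity makes \hb{(\pl+\cA,\ \cB)} uniformly strongly parabolic, so that \Eqref{P.WW} is a very particular case of the cited theorem. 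In other words, the paper deliberately outsources exactly the analytic work you carry out: your retraction--coretraction scheme over the uniformly regular atlas, the uniform half-space estimates via~\cite{DHP03a}, and the parametrix-plus-Neumann-series assembly constitute, in essence, the proof of the general theorem in~\cite{Ama13a} that the paper invokes. What your approach buys is self-containedness and a transparent display of where uniform regularity enters (chart-independent constants, finite multiplicity making the \hbox{$\ell_p$-summation} converge); what the paper's approach buys is brevity and generality, since the cited theorem also covers systems and higher-order operators, at the cost of resting on a reference in preparation. Two small points should you flesh out your version: you have interchanged the names retraction and coretraction (the coretraction is \hb{u\mt(\ka_*(\pi_\ka u))_\ka}, the retraction is the summation map); and the smallness of your commutator remainder on short time intervals requires the usual care that the relevant embedding and trace constants be chosen uniformly in the interval length (for instance by working with functions whose initial trace is subtracted off, or alternatively by first inverting \hb{\pl+\cA+\om} for large \hb{\om>0} and removing the shift afterwards).
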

 \begin{supp} 
 Suppose 
 \hb{0<s<\ol{s}<1+3/p} with 
 \hb{s\neq3/p} and 
 \beq \Label{P.as} 
 \bal 
 \begin{minipage}{300pt}
 $$ 
 a\in B_\iy^{(1+\ol{s})(1,1/2)}(T_1^1M\times J)  
 \qb \vec a\in B_\iy^{(\ol{s},\ol{s}/2)}(TM\times J), 
 $$
 \end{minipage}\\ 
 \begin{minipage}{300pt} 
 $$ 
 a_0\in B_\iy^{(\ol{s},\ol{s}/2)}(M_T) 
 \qb b_0\in B_\iy^{(1+\ol{s})(1,1/2)}(\pl_1M_T). 
 $$ 
 \end{minipage} 
 \eal 
 \kern-20pt 
 \eeq 
 Let $\cW_{p,\cc}^{(s+2)(1,1/2)}(M_T)$ be the linear subspace of 
 $\cW_p^{(s+2)(1,1/2)}(M_T)$ of all $(f,h,u_0)$ satisfying, in addition 
 to \Eqref{P.I.CC}, the first order compatibility condition 
 \beq \Label{P.CC1} 
 \pl h_0(\cdot,0)+\ga\cA(\cdot,0)u_0=\ga f(\cdot,0)\text{ on }\pl_0M 
 \quad\text{if\/ }s>2/p. 
 \eeq  
 Then $\cW_{p,\cc}^{(s+2)(1,1/2)}(M_T)$ is closed and 
 $$ 
 (\pl+\cA,\ \cB,\ga_0) 
 \in\Lis\bigl(W_{\coW p}^{(s+2)(1,1/2)}(M_T), 
 \cW_{p,\cc}^{(s+2)(1,1/2)}(M_T)\bigr). 
 $$ 
 \end{supp} 
 \begin{proof} 
 We set 
 \hb{a_2:=-a^\sh} and 
 \hb{a_1:=\vec a-\tdiv(a^\sh)}, using the notations of the appendix. Then 
 we get from \Eqref{A.T.dg} 
 \beq \Label{P.A} 
 \cA=a_2\btdot\na^2+a_1\btdot\na+a_0. 
 \eeq  
 We let $\nu_\flat$ be the unit conormal vector field~$g_\flat\nu$ 
 on~$\pl M$ and set 
 \hb{b_1:=\nu_\flat\btdot\ga a^\sh}. Then 
 \beq \Label{P.B} 
 \cB_1=b_1\btdot\ga\na+b_0\ga. 
 \eeq  
 By means of the characterization of $BC^k(T^*M)$ by local coordinates 
 referred to in the preceding section one verifies 
 \beq \Label{P.nu} 
 \nu_\flat\in BC^\iy(T^*M). 
 \eeq  
 
 \par 
 Let \Eqref{P.a} be satisfied. Then it is obvious that 
 $$ 
 a_2\in BC^{(1,1/2)}(T_0^2M\times J) 
 \qb a_1\in L_\iy(TM\times J) 
 \qb a_0\in L_\iy(M_T). 
 $$ 
 Furthermore, \Eqref{P.nu} implies 
 $$ 
 b_1\in BC^{(1,1/2)}\bigl((TM)_{|\pl_1M}\times J\bigr). 
 $$ 
 If \Eqref{P.as} applies, then 
 $$ 
 \bal 
 \begin{minipage}{300pt}
 $$ 
 a_2\in B_\iy^{(1+\ol{s})(1,1/2)}(T_0^2M\times J) 
 \qb a_1\in B_\iy^{(\ol{s},\ol{s}/2)}(TM\times J),
 $$
 \end{minipage}\\ 
 \begin{minipage}{300pt} 
 $$ 
 a_0\in B_\iy^{(\ol{s},\ol{s}/2)}(M_T),  
 $$ 
 \end{minipage} 
 \eal 
 $$ 
 and, once more by \Eqref{P.nu} and the point-wise multiplier result 
 \cite[Theorem~14.3]{Ama12c}, 
 $$ 
 b_1\in B_\iy^{(1+\ol{s})(1,1/2)}\bigl((TM)_{|\pl_1M}\times J\bigr). 
 $$ 
 This shows that $\AB$ satisfies in either case the regularity assumptions 
 of the main theorem of~\cite{Ama13a}. Since the uniform strong 
 ellipticity of~$\cA$ implies that 
 \hb{(\pl+\cA,\ \cB)} is a uniformly strongly parabolic boundary value 
 problem the assertion is a very particular consequence of the latter 
 theorem. 
 \end{proof}
 \begin{cor}\LabelT{cor-P.I.P} 
 Let \Eqref{P.a} be satisfied. Then the initial boundary value problem 
 \Eqref{P.I.P} has for each 
 \hb{(f,h,u_0)\in\cW_{p,\cc}^{(2,1)}(M_T)} a~unique strong 
 $L_p$~solution~$u$ on~$M_T$. Suppose 
 \hb{(m+2)/p<s<1+3/p} with 
 \hb{s\neq3/p}, 
 \,\Eqref{P.as} applies, and 
 \hb{(f,h,u_0)\in\cW_{p,\cc}^{(s+2)(1,1/2)}(M_T)}, then $u$~is a classical 
 solution. 
 \end{cor}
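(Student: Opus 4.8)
The plan is to read off both assertions from Theorem~\ref{thm-P.I.P} and its Supplement: the existence and uniqueness of the strong solution come directly from the isomorphism on the $W_{\coW p}^{(2,1)}$-scale, while the classical regularity is obtained by feeding the higher Sobolev regularity supplied by the Supplement into the embedding theorems \Eqref{F.Sob} and \Eqref{F.B2}.

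First I would settle the strong solution. Hypothesis \Eqref{P.a} is exactly that of Theorem~\ref{thm-P.I.P}, which gives
$$(\pl+\cA,\ \cB,\ga_0)\in\Lis\bigl(W_{\coW p}^{(2,1)}(M_T),\cW_{p,\cc}^{(2,1)}(M_T)\bigr).$$
As a bijection it has, for the prescribed datum $(f,h,u_0)\in\cW_{p,\cc}^{(2,1)}(M_T)$, a unique preimage $u\in W_{\coW p}^{(2,1)}(M_T)$, that is, $(\pl+\cA)u=f$, $\cB u=h$, and $\ga_0u=u_0$; by the definition of a strong solution, $u$ is the unique strong $L_p$~solution of \Eqref{P.I.P}.

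For the regularity claim I would invoke the Supplement. Given $(m+2)/p<s<1+3/p$ with $s\neq3/p$, I fix $\ol{s}$ with $s<\ol{s}<1+3/p$; as \Eqref{P.as} is assumed for this $\ol{s}$, the Supplement yields
$$(\pl+\cA,\ \cB,\ga_0)\in\Lis\bigl(W_{\coW p}^{(s+2)(1,1/2)}(M_T),\cW_{p,\cc}^{(s+2)(1,1/2)}(M_T)\bigr),$$
so the datum, now regarded as an element of $\cW_{p,\cc}^{(s+2)(1,1/2)}(M_T)$, is the image of a unique $\tilde u\in W_{\coW p}^{(s+2)(1,1/2)}(M_T)$. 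Since $s>0$ the scale is nested: $W_{\coW p}^{(s+2)(1,1/2)}(M_T)\hr W_{\coW p}^{(2,1)}(M_T)$ and, the compatibility conditions being inherited, $\cW_{p,\cc}^{(s+2)(1,1/2)}(M_T)\hr\cW_{p,\cc}^{(2,1)}(M_T)$. Hence $\tilde u$ is also a strong $L_p$~solution for the same datum, and the uniqueness already proved forces $\tilde u=u$. Thus the solution obtained in the first part in fact lies in $W_{\coW p}^{(s+2)(1,1/2)}(M_T)=W_{\coW p}^{(s+2,(s+2)/2)}(M_T)$.

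It remains to upgrade this to membership in $BC^{(2,1)}(M_T)$. Applying the anisotropic Sobolev-Morrey embedding \Eqref{F.Sob} with $s$ replaced by $s+2$ places $u$ in $B_\iy^{(t,t/2)}(M_T)$ for every $t$ below the threshold $s+2-(m+2)/p$, and the hypothesis $(m+2)/p<s$ is precisely what pushes this threshold past~$2$; I may therefore fix $t$ with $2<t<s+2-(m+2)/p$. For such $t$, \Eqref{F.B2} with $k=2$ supplies $B_\iy^{(t,t/2)}(M_T)\hr BC^{(2,1)}(M_T)$, whence $u\in BC^{(2,1)}(M_T)$ is a classical solution. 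I expect no real obstacle: the only mildly delicate point is the index bookkeeping in this last step, namely checking that the loss of $(m+2)/p$ derivatives in \Eqref{F.Sob} against the target order $k=2$ leaves room for a H\"older exponent $t>2$ exactly when $s>(m+2)/p$. All the genuine analytic difficulty sits in Theorem~\ref{thm-P.I.P} and its Supplement, and ultimately in the general result of~\cite{Ama13a}; the corollary only combines the isomorphism property with the embedding scale.
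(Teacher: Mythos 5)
Your proposal is correct and takes essentially the same route as the paper, whose entire proof is the one-liner that the first assertion is clear (from the isomorphism \Eqref{P.WW}) and the second follows from \Eqref{F.Sob}; you simply spell out the implicit steps, namely invoking the Supplement's isomorphism, identifying the higher-regularity solution with~$u$ via uniqueness, and chaining \Eqref{F.Sob} with \Eqref{F.B2} (for \hb{k=2}) to land in $BC^{(2,1)}(M_T)$. Your index bookkeeping is also the intended one: the condition in \Eqref{F.Sob} must be read as \hb{s-(m+2)/p>t>0} (the printed plus sign is evidently a typo), which is precisely what makes the hypothesis \hb{s>(m+2)/p} the right threshold, as you noted.
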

 \begin{proof} 
 The first assertion is clear and the second one follows from 
 \Eqref{F.Sob}. 
 \end{proof} 
 \begin{rems}\LabelT{rems-P.I.P} 
 (a) 
 If 
 \hb{\da=0} (Dirichlet boundary value problem), then 
 \hb{p=3} is admissible as well. If 
 \hb{\da=\mf{1}} (Neumann or Robin boundary conditions), then 
 \hb{p=3/2} can be admitted also. Similarly, \Eqref{P.CC1} is vacuous if 
 \hb{\da=1}. 
 
 \par 
 (b) 
 If all data are smooth and the compatibility conditions of all orders are 
 satisfied, then $u$~is a smooth solution on~$M_T$.
  
 \par 
 (c) 
 We refer to~\cite{Ama13a} for higher order problems and operators acting 
 on sections of general uniformly regular vector bundles over~$M$. 
 
 \par 
 (d) 
 Theorem~\ref{thm-P.I.P} is the basis for establishing results on the 
 existence, uniqueness, and continuous dependence on the data of solutions 
 of quasilinear parabolic problems of the form 
 $$ 
 \pl u+\cA(u)u=F(u)\text{ on }M_T 
 \qb \cB(u)u=H(u)\text{ on }\pl M_T
 \qb u=u_0\text{ on }M_0. 
 $$ 
 Such results are obtained by (more or less obvious) modifications of the 
 proofs in~\cite{Ama05a}. This is to be carried out somewhere 
 else.\hfill$\qed$
 \end{rems} 
 Of course, Theorem~\ref{thm-P.I.P} applies in particular to autonomous 
 problems. To simplify the presentation we restrict ourselves to the setting 
 of strong $L_p$~solutions. Then \Eqref{P.a} reduces to 
 \beq \Label{P.aa} 
 \bal 
 a  &\in BC^1(T_1^1M), 
        &\quad \vec a  &\in L_\iy(TM),\\  
 a_0&\in L_\iy(M), 
        &\quad b_0      &\in BC^1\bigl((TM)_{|\pl_1M}\bigr).  
 \eal 
 \eeq  
 Of particular importance is the case of homogeneous boundary value 
 problems. 
 
 \par  
 Theorem~\ref{thm-P.I.P} guarantees 
 \hb{\cA\in\cL\bigl(W_{\coW p}^2(M),L_p(M)\bigr)}. Hence~$A$, the 
 restriction of~$\cA$ to~$W_{\coW p,\cB}^2(M)$, is a well-defined element 
 of $\cL\bigl(W_{\coW p,\cB}^2(M),L_p(M)\bigr)$. Moreover, 
 $A$~is closed (cf.\ \cite[Lemma~I.1.1.2]{Ama95a}) and densely defined 
 (since $\cD(\ci M)$~is a subset of~$W_{\coW p,\cB}^2(M)$ and 
 $\cD(\ci M)$~is dense in~$L_p(M)$). By means of~$A$ we can reformulate the 
 autonomous homogeneous initial boundary value problem~\Eqref{A.P} as the 
 evolution equation~\Eqref{A.J}. This is made precise by the next theorem 
 for which we rely on semigroup theory and maximal regularity (see H.~Amann 
 \cite[Chapter~III]{Ama95a} 
 and~\cite{Ama04a}, R.~Denk, M.~Hieber, and J.~Pr{\"u}ss~\cite{DHP03a}, or 
 P.Ch. Kunstmann and L.~Weis~\cite{KuW04a}, for example, for 
 information on these concepts). 
 \begin{thm}\LabelT{thm-P.S} 
 Let $M$ be a uniformly regular Riemannian manifold and let 
 \hb{p\notin\{3/2,3\}}. Suppose $\cA$~is autonomous, uniformly strongly 
 elliptic, and conditions~\Eqref{P.aa} are satisfied. Then $-A$~generates 
 a strongly continuous analytic semigroup on~$L_p(M)$ and has the property 
 of maximal regularity, that is to say, 
 \hb{(\pl+A,\ \ga_0)} belongs to  
 \beq \Label{P.Lis} 
 \Lis\bigl( 
 L_p\bigl(J,W_{\coW p,\cB}^2(M)\bigr) 
  \cap W_{\coW p}^1\bigl(J,L_p(M)\bigr),
 L_p(M_T)\times W_{\coW p,\cB}^{2-2/p}(M)\bigr). 
 \eeq 
 \end{thm}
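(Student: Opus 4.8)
The plan is to read Theorem~\ref{thm-P.S} off the general existence and uniqueness result Theorem~\ref{thm-P.I.P}, specialized to autonomous coefficients and to homogeneous lateral data, and then to invoke the standard abstract passage from maximal regularity to analytic semigroups. Since $\cA$~is autonomous and~\Eqref{P.aa} holds, the time-dependent hypotheses~\Eqref{P.a} are satisfied, so \Eqref{P.WW} is at our disposal. By the definition of the anisotropic scale at $s=2$ the domain space there is $W_{\coW p}^{(2,1)}(M_T)=L_p\bigl(J,W_{\coW p}^2(M)\bigr)\cap W_{\coW p}^1\bigl(J,L_p(M)\bigr)$. First I would restrict the isomorphism~\Eqref{P.WW} to those $u$ with $\cB u=0$ on~$\pl M_T$. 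For $u\in L_p\bigl(J,W_{\coW p}^2(M)\bigr)$ the lateral trace $\cB u$ vanishes precisely when $u(\cdot,t)\in W_{\coW p,\cB}^2(M)$ for almost every~$t$, so this restricted domain is exactly the maximal regularity space $L_p\bigl(J,W_{\coW p,\cB}^2(M)\bigr)\cap W_{\coW p}^1\bigl(J,L_p(M)\bigr)$ occurring in~\Eqref{P.Lis}.

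Next I would track what $h=0$ does on the data side. It deletes the lateral factor of~$\cW_{p,\cc}^{(2,1)}(M_T)$ and reduces the zero-order compatibility conditions~\Eqref{P.I.CC} to $\ga u_0=0$ on~$\pl_0M$ when $3/2<p<3$, and to $\cB u_0=0$ on~$\pl M$ when $p>3$, while for $1<p<3/2$ no constraint survives. A glance at the definition~\Eqref{A.WB} shows that these are exactly the conditions carving out $W_{\coW p,\cB}^{2-2/p}(M)$ from $W_{\coW p}^{2-2/p}(M)$ in each range of~$p$. Hence with $h=0$ the target space becomes $L_p(M_T)\times W_{\coW p,\cB}^{2-2/p}(M)$. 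Because $A$~is, by~\Eqref{A.A}, nothing but the restriction of~$\cA$ to $W_{\coW p,\cB}^2(M)$, on the restricted domain $\pl+\cA$~agrees with $\pl+A$, so \Eqref{P.WW} delivers precisely the membership~\Eqref{P.Lis}; this is the asserted maximal regularity property.

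To obtain the generation statement I would then use that $A$~is closed and densely defined (as already noted) and that~\Eqref{P.Lis} says exactly that $A$~enjoys maximal $L_p$-regularity on the finite interval~$J$. By the standard fact that maximal $L_p$-regularity forces $A$~to be sectorial of angle less than $\pi/2$ — see H.~Amann~\cite[Chapter~III]{Ama95a}, \cite{Ama04a}, R.~Denk, M.~Hieber, and J.~Pr{\"u}ss~\cite{DHP03a}, or P.Ch.\ Kunstmann and L.~Weis~\cite{KuW04a} — the operator $-A$~generates a strongly continuous analytic semigroup on~$L_p(M)$, completing the proof.

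The one ingredient that is not mere bookkeeping, and which I would import verbatim from the trace and interpolation calculus for anisotropic spaces on uniformly regular manifolds (\cite{Ama12c}, \cite{Ama13a}), is the identification of the temporal trace space at $t=0$: one needs $\bigl(L_p(M),W_{\coW p,\cB}^2(M)\bigr)_{1-1/p,p}=W_{\coW p,\cB}^{2-2/p}(M)$, which is what guarantees that the compatibility conditions~\Eqref{P.I.CC} are sufficient, not merely necessary, for solvability in the claimed class. This is already built into Theorem~\ref{thm-P.I.P}, so once it is granted the argument above is essentially automatic; the only place that genuinely needs care is checking that the three $p$-ranges of~\Eqref{A.WB} line up with the two cases of~\Eqref{P.I.CC}, the borderline values $p\in\{3/2,3\}$ being excluded by hypothesis.
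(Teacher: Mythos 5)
Your proposal is correct and takes essentially the same approach as the paper: the paper likewise specializes Theorem~\ref{thm-P.I.P} to homogeneous lateral data, observes that in this setting $\cW_{p,\cc}^{(2,1)}(M_T)$ reduces to $L_p(M_T)\times W_{\coW p,\cB}^{2-2/p}(M)$ so that \Eqref{P.Lis} is a reformulation of \Eqref{P.WW}, and then obtains the generation of a strongly continuous analytic semigroup from a result of G.~Dore~\cite{Dor93b}, which is exactly the maximal-regularity-implies-analytic-generation fact you invoke. Your write-up merely spells out the bookkeeping (matching the $p$-ranges of \Eqref{A.WB} with the compatibility conditions \Eqref{P.I.CC}, and identifying the restricted domain with the maximal regularity space) that the paper leaves implicit.
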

 \begin{proof} 
 In the present setting 
 \hb{\cW_{p,\cc}^{(2,1)}(M_T)=L_p(M_T)\times W_{\coW p,\cB}^{2-2/p}(M)}. 
 Hence \Eqref{P.Lis} is a reformulation of \Eqref{P.WW}. Now the semigroup 
 assertion follows from a result of G.~Dore~\cite{Dor93b}. 
 \end{proof} 
 To indicate the power of these theorems we need to know examples of 
 uniformly regular Riemannian manifolds. This problem is dealt with 
 in~\cite{Ama13d} where proofs for the following claims are found. 
 \begin{exs}\LabelT{exs-P.I.ex} 
 (a) 
 Every compact Riemannian manifold is a uniformly regular Riemannian 
 manifold.

 \par 
 (b) 
 An \hbox{$m$-dimensional} Riemannian submanifold of~$\BR^m$
 possessing a compact boundary is a uniformly regular Riemannian 
 manifold.

 \par
 (c) 
 \hb{\BR^m=(\BR^m,g_m)} and 
 \hb{\BH^m=(\BH^m,g_m)} are uniformly regular Riemannian manifolds. 

 \par 
 (d) 
 Let 
 \hb{\tilde{M}=\tMtg} be a Riemannian manifold and  
 \hb{\vp\sco\Mg\ra\tMtg} an isometry. Then $M$~is a uniformly regular 
 Riemannian manifold iff $\tilde{M}$~is one. 

 \par 
 (e) 
 A~Riemannian manifold has \emph{bounded geometry} if it has no boundary, 
 a~positive injectivity radius, and all covariant derivatives of the 
 curvature tensor are bounded. Every complete Riemannian manifold with 
 bounded geometry is a uniformly regular Riemannian manifold.
 
 \par 
 (f) 
 Suppose 
 \hb{S\is U\is M}, where $S$~is closed and $U$~is open in~$M$. An 
 atlas~$\gK$ for~$U$ is \emph{uniformly regular on}~$S$ if \Eqref{F.S.K} 
 holds with $\gK$ replaced by~$\gK_S$. Two uniformly regular atlases $\gK$ 
 and~$\tilde\gK$ for~$U$ on~$S$ are equivalent if \Eqref{F.S.Keq} applies 
 to $\gK_S$ and~$\tilde\gK_S$. This defines a uniformly regular structure 
 for~$U$ \emph{on}~$S$. Then $U$~is \emph{uniformly regular on}~$S$ if it is 
 endowed with a uniformly regular structure on~$S$. Lastly, $U$~is a 
 \emph{uniformly regular Riemannian manifold on}~$S$ if \Eqref{F.S.Rr} is 
 satisfied for $U$ and~$\gK_S$, where $\gK$~is a uniformly regular atlas 
 for~$U$ on~$S$. 
 
 \par 
 Let 
 \hb{S_j\is U_{\coU j}\is M} and suppose $U_{\coU j}$~is a uniformly regular 
 Riemannian manifold on~$S_j$ for 
 \hb{0\leq j\leq\ell}. Let $\gK_j$ be a uniformly regular atlas 
 for~$U_{\coU j}$ on~$S_j$. Assume  
 
 \par 
 ($\al$) 
 \hb{\|\ka_i\circ\ka_j^{-1}\|_{k,\iy}\leq c(k)}, 
 \ \hb{(\ka_i,\ka_j)\in\gK_{i,S_i}\times\gK_{j,S_j}}, 
 \ \hb{0\leq i,j\leq\ell}, 
 \ \hb{k\in\BN}; 
 
 \par 
 ($\ba$) 
 \hb{M=S_0\cup\cdots\cup S_\ell}.

 \par 
 \noindent 
 Then 
 \hb{\gK:=\gK_0\cup\cdots\cup\gK_\ell} is a uniformly regular atlas for~$M$ 
 and $M$~is a uniformly regular Riemannian manifold. It is obtained by 
 \emph{patching together the uniformly regular pieces~$U_{\coU j}$ on~$S_j$}. 
 
 \par 
 (g) 
 Assume 
 \hb{d\geq m} and $B$~is an 
 \hb{(m-1)}-dimensional compact submanifold of~$\BR^{d-1}$. For a nonempty 
 subinterval~$I$ of~$(1,\iy)$ and 
 \hb{0\leq\al\leq1} we set 
 $$ 
 F_\al\IB:=\bigl\{\,(t,t^\al y)\ ;\ t\in I,\ y\in B\,\bigr\} 
 \is\BR\times\BR^{d-1}=\BR^d, 
 $$ 
 Then 
 \hb{F_\al(B):=F_\al\bigl((1,\iy),B\bigr)}, 
 endowed with the Riemannian metric induced by~$\BR^d$, that is, by~$g_d$, 
 is an \hbox{$m$-dimensional} 
 Riemannian submanifold of~$\BR^d$ with boundary~$F_\al(\pl B)$, where 
 \hb{F_\al(\es):=\es}. It is called \hbox{$\al$-\emph{funnel}} in~$\BR^d$. 
 Note that~a 
 \hbox{$0$-funnel} is a cylinder and~a 
 \hbox{$1$-funnel} a (blunt) cone over (the basis)~$B$. 
 
 \par 
 Let 
 \hb{F=F_\al(B)} be an \hbox{$\al$-funnel} in~$\BR^d$ and set 
 \hb{S:=F_\al\bigl([2,\iy),B\bigr)}. Then $F$~is an \hbox{$m$-dimensional} 
 uniformly regular Riemannian manifold on~$S$. 

 \par 
 (h) 
 Suppose $U$~is open in~$M$ and 
 \hb{F=F_\al(B)} an \hbox{$m$-dimensional} \hbox{$\al$-funnel} in~$\BR^d$. 
 Set 
 \hb{F(I):=F_\al\IB}. Assume 
 \hb{\vp\sco U\ra F} is a diffeomorphism such that 
 \hb{S:=\vp^{-1}\bigl(F[2,\iy)\bigr)} satisfies  
 
 \par 
 ($\al$) 
 \hb{(\,\ol{U\ssm S}\,)\cap S=\vp^{-1}\bigl(F(\{2\})\bigr)}; 
 
 \par 
 ($\ba$) 
 \hb{\vp_*(g\sn S)\sim g_F\sn F[2,\iy)}. 
 
 \par 
 \noindent 
 Then $U$~is a uniformly regular Riemannian manifold on~$S$, and $M$~is 
 said to have an $(\al,B)$-\emph{funnel-like end in~$U$ with 
 representation}~$\vp$.

 \par 
 (i) 
 Let 
 \hb{U_0,\ldots,U_\ell} be open in~$M$. Suppose 
 
 \par 
 ($\al$) 
 \hb{U_i\cap U_{\coU j}=\es,\ \ 1\leq i<j\leq\ell}; 
 
 \par 
 \hangindent\parindent 
 ($\ba$) 
 $M$~has an $(\al_j,B_j)$-funnel-like end in~$U_{\coU j}$ with 
 representation~$\vp_j$ for 
 \hb{j\geq1}; 
 
 \par 
 \hangindent=0cm
 ($\ga$) 
 \hb{\vp_j(U_0\cap U_{\coU j})=F_j(1,4)}, 
 \ \hb{j\geq1}; 
 
 \par 
 ($\da$) 
 \hb{S_0:=U_0\ssm\bigcup_{j=1}^\ell\vp_j^{-1}\bigl(F_j(3,\iy)\bigr)} is 
 compact. 
 
 \par 
 \noindent 
 Then $M$~is a uniformly regular Riemannian manifold, a~\emph{Riemannian 
 manifold with finitely many funnel-like ends}. It is obtained by 
 patching together the uniformly regular pieces~$U_{\coU j}$ on~$S_j$ for 
 \hb{0\leq j\leq\ell}.\hfill$\qed$ 
 \end{exs}
 The most elementary situation in which Theorem~\ref{thm-P.I.P} applies is 
 the case in which $M$~is compact. If, notably, $M$~is the closure of a 
 smooth bounded open subset of~$\BR^m$, then our theorem reduces essentially 
 to a well-known classical result (e.g.,~\cite{LSU68a}). 
 
 \par 
 More recently, G.~Grubb~\cite{Gru95b} has established a general 
 \hbox{$L_p$~theory} for parabolic pseudo-differential boundary value 
 problems acting on sections of vector bundles (also see Section~IV.4.1 
 in~\cite{Gru96a}). It applies to a class of noncompact manifolds, called 
 `admissible' and being introduced in G.~Grubb and 
 N.J.~Kokholm~\cite{GrK93a}. It is a subclass of the above family of 
 manifolds with funnel-like ends, namely a family of manifolds with 
 conical ends. 
 Of course, aside from the requirements on the manifold, differential 
 boundary value problems of the form considered in the present paper 
 constitute a very particular subcase of Grubb's general class. 
 However, in order to apply the results of~\cite{Gru95b} to \Eqref{P.I.P} 
 we have to require that $\AB$ has \hbox{$C^\iy$~coefficients}. In contrast, 
 we impose in essence minimal regularity assumptions on~$\AB$. This is 
 important for the study of quasilinear equations on the basis of the 
 linear theorems proved here. 
 
 \par 
 Now we suppose that $M$~is a noncompact uniformly regular Riemannian 
 manifold not belonging to the class of manifolds with funnel-like ends. 
 This is the case, in particular, if $M$~has no boundary, is complete, 
 and has bounded geometry. There is a tremendous amount of literature 
 on heat equations for such manifolds, most of which is an 
 \hbox{$L_2$~theory} and is concerned with kernel estimates and 
 spectral theory (see, for example, E.B. Davies~\cite{Dav89a} or 
 A.~Grigor'yan~\cite{Gri09a} and the references therein). There are a few 
 papers dealing with (semilinear) parabolic equations on noncompact 
 complete Riemannian manifolds under various curvature assumptions which are 
 based on heat kernel estimates
 (e.g.,~Qi~S. Zhang~\cite{Zha97a},~\cite{Zha00a}, 
 A.L. Mazzucato and V.~Nistor~\cite{MaN06a}, F.~Punzo  
 \cite{Pun11a},~\cite{Pun12a}, C.~Bandle, F.~Punzo, and 
 A.~Tesei~\cite{BPuT12a}).  In all these papers either the top-order part is 
 the Laplace-Beltrami operator or smooth leading order coefficients are 
 required. 
 
 \par 
 Except for a recent paper by Y.~Shao and 
 G.~Simonett~\cite{ShS13a}, the author is not aware of any result on 
 parabolic equations on noncompact manifolds which do not rely on heat 
 kernel techniques, leave alone noncompact manifolds with noncompact 
 boundary. In~\cite{ShS13a} the authors, building on \cite{Ama12c} 
 and~\cite{Ama12b}, establish a 
 H\"older space existence theorem for autonomous nonlinear parabolic 
 equations on uniformly regular manifolds without boundary. As an 
 application they 
 show that the solutions of the Yamabe flow instantaneously regularize and 
 become real analytic in space and time. 
  
 \par 
 A prototypical example to which our results apply is furnished by  an 
 \hbox{$m$-dimensional} Riemannian submanifold 
 \hb{M_\sH=(M_\sH,g_\sH)} of the  hyperbolic 
 space~$\sH^m$ represented by the Poincar\'e model. More specifically, 
 we denote by~$\BB^m$ the open unit ball in~$\BR^m$ with 
 closure~$\bar\BB^m$ and boundary~$S^{m-1}$, the 
 \hb{(m-1)}-sphere. Then 
 \hb{\sH=\sH^m=(\BB^m,g_\sH)}, where 
 \hb{g_\sH=4g_m/(1-|x|^2)^2} for 
 \hb{x\in\BB^m}. If $\pl M_\sH$~is not compact, 
 then we assume that its closure in~$\bar\BB^m$ intersects~$S^{m-1}$ 
 transversally and that this intersection is the boundary of an 
 \hb{(m-1)}-dimensional Riemannian submanifold of~$S^{m-1}$. 
 Informally expressed this means, in particular, that $M_\sH$~`does not 
 collapse at infinity'. 
 
 \par 
 Writing~$\tdiv_\sH$ for~$\tdiv_{g_\sH}$, etc., problem~\Eqref{P.I.P} is on 
 \hb{M_\sH\times J} given by 
 $$ 
 \cA_\sH u:=-\tdiv_\sH(a\btdot\grad_\sH u)+(\vec a\sn\grad_\sH u)_\sH 
 +a_0u 
 $$  
 and 
 $$ 
 \cB_\sH u:=
 \left\{ 
 \bal 
 &\ga u     
  &\quad \text{on } &&\pl_0M_\sH,\\ 
 &\bigl(\nu_{\pl M_\sH}\bsn\ga(a\btdot\grad_\sH u)\bigr)_\sH 
  &\quad \text{on } &&\pl_1M_\sH.\\ 
 \eal 
 \right. 
 $$ 
 Using the fact that $g_\sH$~is conformal to~$g_m$ we can express $\cA_\sH$ 
 and~$\cB_\sH$ in terms of~$g_m$, that is, as differential operators on 
 \hb{M:=(M_\sH,g_m)}. In fact, writing 
 \hb{\tdiv=\tdiv_{g_m}}, etc., we find with 
 \hb{\rho(x):=(1-|x|^2)/2} 
 \beq \Label{P.I.div} 
 \bal 
 \tdiv_\sH(a\btdot\grad_\sH u) 
 &=\rho^m\pl_i(\rho^{2-m}a_j^i\da^{jk}\pl_ku) 
  =\rho^m\tdiv(\rho^{2-m}a\btdot\grad u)\\ 
 &=\tdiv(\rho^2a\btdot\grad u)-m(\rho a\btdot\grad\rho\sn\grad u)  
 \eal 
 \eeq 
 and 
 \hb{(\vec a\sn\grad_\sH u)_\sH=(\vec a\sn\grad u)}. Moreover, 
 \hb{\nu_{\pl M_\sH}=\rho\nu} and, consequently, 
 $$ 
 \bigl(\nu_\sH\bsn\ga(a\btdot\grad_\sH u)\bigr)_\sH 
 =\rho\bigl(\nu\bsn\ga(a\btdot\grad u)\bigr). 
 $$ 
 This shows that the initial boundary value problem 
 $$ 
 \pl_tu+\cA_\sH u=f\text{ on }M_\sH\times J 
 \qb \cB_\sH u=h\text{ on }\pl M_\sH\times J 
 \qb u=u_0\text{ on }M_\sH\times\{0\} 
 $$
 can be seen as a degenerate initial boundary value problem on the 
 `underlying' Euclidean manifold~$M$. Note that $M$~is not a uniformly 
 regular Riemannian manifold, even if 
 \hb{\pl M=\es}, that is, 
 \hb{M=\BB^m}, since it cannot be covered by an atlas~$\gK$ whose 
 coordinate patches are uniformly comparable in size and such that a 
 uniform shrinking of~$\gK$ is still an atlas. 
 \section{Singular Riemannian Manifolds and Weighted Function 
 Spaces\Label{sec-S}} 
 Generalizing the preceding example we are led to the concept of singular 
 Riemannian manifolds. Informally speaking, such a manifold is characterized 
 by~a singularity function 
 \hb{\rho\in C^\iy\bigl(M,(0,\iy)\bigr)} such that the conformal metric 
 \hb{\hat g:=g/\rho^2} gives rise to a uniformly regular Riemannian manifold 
 \hb{\hat M:=\Mhg}. To be precise: 
  
 \par 
 Let $M$ be an \hbox{$m$-dimensional} uniformly regular manifold. A~pair 
 $\rgK$ is~a \emph{singularity datum} for~$M$ if 
 \hb{\rho\in C^\iy\bigl(M,(0,\iy)\bigr)} and $\gK$~is a uniformly regular 
 atlas such that 
 \beq\Label{S.sd} 
 \bal
 \rm{(i)}   \qquad    &\|\ka_*\rho\|_{k,\iy}\leq c(k)\rho_\ka, 
                       \ \ \ka\in\gK,
                       \ \ k\in\BN,\\ 
 \noalign{\vskip-1\jot} 
                      &\text{\ where }
                       \rho_\ka:=\ka_*\rho(0)=\rho\bigl(\ka^{-1}(0)\bigr);\\
 \rm{(ii)}  \qquad    &\rho\sn U_{\coU\ka}\sim\rho_\ka,
                       \ \ \ka\in\gK.
 \eal
 \eeq 
 Two singularity data $\rgK$ and $\trtK$ are \emph{equivalent}, 
 \hb{\rgK\approx\trtK}, if 
 \hb{\rho\sim\tilde{\rho}} and 
 \hb{\gK\approx\tilde{\gK}}. 
 
 \par
 A~\emph{singularity structure},~$\gS(M)$, for~$M$ is a maximal family of
 equivalent singularity data. A~\emph{singularity function} for~$M$ is a
 \hb{\rho\in C^\iy\bigl(M,(0,\iy)\bigr)} such that there exists an
 atlas~$\gK$ with
 \hb{\rgK\in\gS(M)}. The set of all singularity functions is the
 \emph{singularity type} of~$M$. It is convenient to denote it by~%
 \hb{[\![\rho]\!]}, where $\rho$~is one of its representatives. 
 
 \par 
 A~\hh{singular Riemannian manifold of type}~%
 \hb{[\![\rho]\!]} is a Riemannian manifold $\Mg$ such that 
 \beq\Label{S.SM} 
 \bal
 \rm{(i)}   \qquad    &M\text{ is uniformly regular and endowed with}\\  
 \noalign{\vskip-1\jot} 
                     &\text{a singularity structure $\gS(M)$ of singularity 
                      type }[\![\rho]\!];\\
 \rm{(ii)}  \qquad    &(M,g/\rho^2)\text{ is a uniformly regular 
                      Riemannian manifold}.
 \eal
 \eeq 
 This definition is independent of the particular choice of~$\rho$ in the 
 following sense: Let 
 \hb{\trtK\approx\rgK}. Then it follows from \Eqref{F.S.Rr}(ii), (iii) and  
 \Eqref{S.sd} that 
 \hb{(M,g/\tilde{\rho}^2)} is a uniformly regular Riemannian manifold and 
 \hb{g/\tilde{\rho}^2\sim g/\rho^2}. In~\cite{Ama13a} it is shown that 
 \Eqref{S.sd}(i) is equivalent to 
 \beq\Label{S.log} 
 d\log\rho\in BC^\iy(T^*\hat M). 
 \eeq 
 In~\cite{Ama13d} there is carried out a detailed study of singular 
 Riemannian manifolds. We refer the reader to that paper for proofs of the 
 following examples. 
 \begin{exs}\LabelT{exs-S.exS} 
 (a) 
 A~uniformly regular Riemannian manifold is singular of 
 type~%
 \hb{[\![\mf{1}]\!]}, and conversely. 

 \par 
 (b) 
 Let $\Ga$ be a union of connected components of~$\pl M$ and 
 \hb{m\geq2}. We endow~$\Ga$ with the induced Riemannian metric 
 \hb{\thg:=\thia\,^*g}, where  
 \hb{\thia\sco\Ga\hr M} is the natural embedding. Let $\gK$ be a uniformly 
 regular atlas for~$M$. For 
 \hb{\ka\in\gK_\Ga} we set 
 \hb{U_{\coU\ithka}:=\pl U_{\coU\ka}:=U_{\coU\ka}\cap\pl M 
    =U_{\coU\ka}\cap\Ga} and 
 \hb{\thka:=\ia_0\circ(\thia\,^*\ka)\sco U_{\coU\ithka}\ra\BR^{m-1}} with 
 \hb{\ia_0\sco\{0\}\times\BR^{m-1}\ra\BR^{m-1}}, 
 \ \hb{(0,x')\mt x'}. Then 
 \hb{\thgK:=\{\,\thka\ ;\ \ka\in\gK_\Ga\,\}} is a uniformly regular atlas 
 for~$\Ga$, the one \emph{induced by}~$\gK$. 
 
 \par 
 Suppose $\Mg$ is a singular Riemannian manifold of type~%
 \hb{[\![\rho]\!]}. We set 
 \hb{\thrho:=\thia\,^*\rho=\rho\sn\Ga}. Then $(\thrho,\thgK)$ 
 is a singularity datum for~$\Ga$. Thus it defines a singular 
 structure~$\thgS(\Ga)$ for~$\Ga$, the one \emph{induced by}~$\gS(M)$. 
 Furthermore, $(\Ga,\thg/\thrho^2)$ is a singular Riemannian manifold of 
 type~%
 \hb{[\![\thrho]\!]} (and dimension 
 \hb{m-1}). It is always understood that $\Ga$~is endowed with 
 the singular structure induced by the one of~$M$. 

 \par 
 (c)  
 Let $\tMtg$ be a Riemannian manifold and 
 \hb{f\sco M\ra\tilde{M}} an isometric diffeomorphism, that is, 
 \hb{\tilde{g}=f_*g}. Suppose $\Mg$ is singular of type~%
 \hb{[\![\rho]\!]} with singularity datum~$\rgK$. Set 
 \hb{f_*\gK:=\{\,f_*\ka\ ;\ \ka\in\gK\,\}}. Then the pair 
 $(f_*\rho,f_*\gK)$ is a singularity datum for $\tMtg$ and the latter is a 
 singular Riemannian manifold of 
 type~%
 \hb{[\![f_*\rho]\!]}.
 
 \par 
 (d) 
 Suppose 
 \hb{S\is U\is M}, where $S$~is closed and $U$~is open in~$M$. Assume 
 \hb{\rho\in C^\iy\bigl(U,(0,\iy)\bigr)} and $\gK$~is a uniformly regular 
 atlas for~$U$ on~$S$ such that \Eqref{S.sd} holds for~$\gK_S$. Then 
 $\rgK$ is a singularity structure for~$U$ \emph{on}~$S$. Two such 
 singularity structures $\rgK$ and~$\trtK$ are equivalent \emph{on}~$S$ if 
 \hb{\rho\sim\tilde\rho} and $\gK$ and~$\tilde\gK$ are equivalent on~$S$. 
 This defines a singularity structure \emph{for~$U$ on}~$S$ of type~%
 \hb{[\![\rho]\!]}. Then $U$~is a \emph{singular Riemannian manifold on~$S$ 
 of type}~%
 \hb{[\![\rho]\!]} if it is endowed with a singularity structure 
 on~$S$ of type~%
 \hb{[\![\rho]\!]} and 
 \hb{(U,g/\rho^2)} is a uniformly regular Riemannian manifold on~$S$. 
 
 \par 
 Assume 
 \hb{S_j\is U_{\coU j}\is M} and $U_{\coU j}$~is a uniformly regular 
 Riemannian manifold on~$S_j$ of type~%
 \hb{[\![\rho_j]\!]} for 
 \hb{0\leq j\leq\ell}. Let $(\rho_j,\gK_j)$ be a singularity structure 
 for~$U_{\coU j}$ on~$S_j$ and assume that ($\al$) and~($\ba$) of 
 Example~\ref{exs-P.I.ex}(f) apply and 
 $$ 
 \rho_i\sn(S_i\cap S_j)\sim\rho_j\sn(S_i\cap S_j) 
 \qa 0\leq i<j\leq\ell. 
 $$ 
 Then there exists 
 \hb{\rho\in C^\iy\bigl(M,(0,\iy)\bigr)} such that 
 \hb{\rho\sn S_j\sim\rho_j\sn S_j} for 
 \hb{0\leq j\leq\ell} and $\rgK$ is a singularity datum for~$M$, where 
 \hb{\gK=\gK_0\cup\cdots\cup\gK_\ell}. Furthermore, $M$~is a singular 
 Riemannian manifold of type~%
 \hb{[\![\rho]\!]}. It is said to be obtained by \emph{patching together 
 the singular Riemannian manifolds~$U_{\coU j}$ on~$S_j$ of type}~%
 \hb{[\![\rho_j]\!]}. 

 \par 
 (e) 
 Let 
 \hb{d\geq2} and suppose $B$~is a \hbox{$b$-dimensional} compact Riemannian 
 submanifold of~$\BR^{d-1}$. For a nonempty subinterval~$I$ of~$(0,1)$ and 
 \hb{\al\geq1} we set 
 $$ 
 C_\al^d\IB 
 :=\bigl\{\,(t,t^\al y)\ ;\ t\in I,\ y\in B\,\bigr\} 
 \is\BR\times\BR^{d-1}=\BR^d.   
 $$ 
 We endow 
 \hb{C_\al^d(B):=C_\al^d\bigl((0,1),B\bigr)} with the metric 
 induced by~$\BR^d$. It is called 
 \emph{model \hbox{$\al$-cusp} over} (the base)~$B$ in~$\BR^d$. 
 Note that a \hbox{$1$-cusp} is a cone. 
 
 \par 
 For 
 \hb{\ell\in\BN} we set 
 \hb{C_{\al,\ell}^d\IB:=C_\al^d\IB} if 
 \hb{\ell=0}, and 
 $$ 
 C_{\al,\ell}^d\IB:=C_\al^d\IB\times IQ^\ell 
 \qa \ell>0, 
 $$ 
 where 
 \hb{IQ^\ell=\{\,tz\in\BR^\ell\ ;\ t\in I,\ z\in Q^\ell\,\bigr\}}. Then 
 \hb{C_{\al,\ell}^d(B):=C_{\al,\ell}^d\bigl((0,1),B\bigr)} is~a 
 \hb{(1+b+\ell)}-dimensional Riemannian submanifold of 
 \hb{\BR^d\times\BR^\ell=\BR^{d+\ell}}, a~\emph{model 
 $(\al,\ell)$-wedge over}~$B$, also 
 called \emph{model $\ell$-wedge over} $C_\al^d(B)$. Thus every model cusp 
 is a model wedge, a \hbox{$0$-wedge}. Every $(\al,\ell)$-wedge 
 \hb{C=C_{\al,\ell}^d(B)} is a singular Riemannian manifold on 
 \hb{S:=C_{\al,\ell}^d\bigl((0,3/4],B\bigr)} of type~%
 \hb{[\![R_\al]\!]}, where the \emph{cusp characteristic}~$R_\al$ is 
 defined by 
 \hb{R_\al(x):=t^\al} for 
 \hb{x=(t,y,z)\in C} with 
 \hb{y\in Q^\ell}. 

 \par 
 (f) 
 Let $U$ be open in~$M$ and set 
 \hb{C:=C_{\al,\ell}^d(B)} and 
 \hb{S:=C_{\al,\ell}^d\bigl((0,3/4],B\bigr)} with 
 \hb{\ell:=m-1-b\geq0}. Suppose that  
 \hb{\vp\sco U\ra C} is a diffeomorphism such that 
 \hb{(\vp_*g)\sn S\sim g_C\sn S}. Then $U$~is a singular Riemannian manifold 
 on~$\vp^{-1}(S)$ of type~%
 \hb{[\![\vp^*R_\al]\!]}. It is said to be an $(\al,\ell)$-\emph{wedge 
 represented by}~$\vp$. 

 \par 
 (g) 
 Let 
 \hb{d\geq m} and let $B$ be an 
 \hb{(m-1)}-dimensional compact submanifold of~$\BR^{d-1}$. For a nonempty 
 subinterval~$I$ of $(1,\iy)$ and 
 \hb{\al<0} we set 
 $$ 
 K_\al^d\IB 
 :=\bigl\{\,(t,t^\al y)\ ;\ t\in I,\ y\in B\,\bigr\} 
 \is\BR\times\BR^{d-1}=\BR^d. 
 $$ 
 Then 
 \hb{K_\al^d(B):=K_\al^d\bigl((1,\iy),B\bigr)} is 
 considered as an \hbox{$m$-dimensional} Riemannian submanifold of~$\BR^d$, 
 an \emph{infinite \hbox{$\al$-cusp} over}~$B$ in~$\BR^d$. Its cusp 
 characteristic~$R_\al$ is given by 
 \hb{R_\al(x):=t^\al} for 
 \hb{x=(t,t^\al y)\in K_\al^d(B)}. It holds that $K_\al^d(B)$~is a singular 
 Riemannian manifold on 
 \hb{K_\al^d\bigl([2,\iy),B\bigr)} of type~%
 \hb{[\![R_\al]\!]}.

 \par 
 (h)
 Let $U$ be open in~$M$ and let 
 \hb{K:=K_\al^d(B)} be an infinite \hbox{$\al$-cusp} over~$B$ in~$\BR^d$. 
 Set 
 \hb{S:=K_\al^d\bigl([2,\iy),B\bigr)}. Let 
 \hb{\vp\sco U\ra K} be a diffeomorphism satisfying 
 
 \par 
 ($\al$) 
 \hb{\ol{U\ssm\vp^{-1}(S)}\cap S=\vp^{-1}\bigl(K_\al^d(\{2\},B)\bigr)}; 
 
 \par 
 ($\ba$) 
 \hb{(\vp_*g)\sn S\sim g_K\sn S}.

 \par 
 \noindent 
 Then $U$~is a singular Riemannian manifold on~$\vp^{-1}(S)$ of type~%
 \hb{[\![\vp^*R_\al[\!]}. Furthermore, $M$~is said to have in~$U$ an 
 \emph{infinite \hbox{$\al$-cusp}} (more precisely: 
 $(\al,B)$-\emph{cusp}) \emph{represented by}~$\vp$.

 \par 
 (i) 
 Assume that $M$~is an \hbox{$m$-dimensional} Riemannian submanifold 
 for~$\BR^n$ for some 
 \hb{n\geq m}. Then 
 \hb{\cS(M):=\bar M\ssm M}, where $\bar M$~is the closure of~$M$ in~$\BR^n$, 
 is the \emph{singularity set} of~$M$. It is independent of~$n$ since the 
 closure of~$M$ in~$\BR^{\tilde n}$ with 
 \hb{\tilde n>n} and 
 \hb{\BR^n=\BR^n\times\{0\}\is\BR^{\tilde n}} equals $\bar M$ also. 
 
 \par 
 Suppose $\Sa$~is a connected component of~$\cS(M)$ with the following 
 properties: 
 
 \par 
 \hangindent\parindent 
 ($\al$) it is an \hbox{$\ell$-dimensional} compact Riemannian submanifold 
 of~$\BR^n$ without boundary, where 
 \hb{\ell\in\{0,\ldots,m-1\}};  
  
 \par 
 \hangindent\parindent 
 ($\ba$) there exist 
 \hb{\al\geq1}, a~compact 
 \hb{(m-\ell-1)}-dimensional Riemannian submanifold~$B$ of~$\BR^d$ with 
 \hb{d\geq m-\ell}, and for each 
 \hb{p\in\Sa} a~normalized chart~$\Phi_p$ for~$\BR^n$ at~$p$ such that, 
 setting 
 \hb{V_{\coV p}:=\dom(\Phi_h)}, 
 $$ 
 \Phi_p(M\cap V_{\coV p})=C_{(\al,\ell)}^d(B)\times\{0\} 
 \is\BR^{d+\ell}\times\BR^{n-d-\ell}=\BR^n  
 $$ 
 and 
 $$ 
 \Phi_p(\Sa\cap V_{\coV p}) 
 =\bigl(\{0\}\times Q^\ell\bigr)\times\{0\}; 
 $$ 
   
 \par 
 \hangindent\parindent 
 ($\ga$) 
 \hb{U_{\coU p}:=M\cap V_{\coV p}} is an $(\al,\ell)$-wedge represented by 
 \hb{\vp_p:=\Phi_p\sn U_{\coU p}}.
 
 \par 
 \noindent 
 \hangindent=0cm
 Then $M$~is said to possess~a \emph{smooth cuspidal singularity of type} 
 $(\al,\ell)$ (more precisely: $(\al,\ell,B)$) \emph{near}~$\Sa$. 
  
 \par 
 Let 
 \hb{\Sa\is\cS(M)} and assume $M$~has a smooth cuspidal singularity of type 
 $(\al,\ell)$ near~$\Sa$. Also assume that there exist relatively compact 
 open neighborhoods $V$ and~$W$ of~$\Sa$ in~$\BR^n$ with 
 \hb{\bar W\is  V} possessing the following properties: set 
 \hb{U:=V\cap M} and 
 \hb{S:=\bar W\cap M}. Then there is 
 \hb{\rho_\al\in C^\iy\bigl(U,(0,\iy)\bigr)} such that 
 $$ 
 \rho_\al\sn(S\cap U_{\coU p})\sim\vp_p^*R_\al\sn(S\cap U_{\coU p}) 
 \qa p\in\Sa, 
 $$ 
 and $U$~is on~$S$ a singular Riemannian manifold of type~%
 \hb{[\![\rho_\al[\!]}. Loosely speaking: $\Sa$~is then said to be~a 
 \emph{smooth $(\al,\ell)$-wedge}, more precisely, 
 a~\emph{smooth $(\al,\ell,B)$-wedge}. It is a \emph{smooth 
 \hbox{$\al$-cusp}}, respectively $(\al,B)$-\emph{cusp}, if 
 \hb{\ell=0}. Note that near every smooth $(\al,S^{m-1})$-cusp $M$~looks 
 locally like 
 \hb{\BR^m\ssm\{0\}} near~$0$. (In this case we choose 
 \hb{d=m+1}.) 

 \par 
 (j) 
 Let $M$ be an \hbox{$m$-dimensional} Riemannian submanifold of~$\BR^n$ for 
 some 
 \hb{n\geq m}. Suppose: 
 
 \par 
 \hangindent\parindent 
 ($\al$) $\cS(M)$~is compact and for each connected component~$\Sa$ 
 of~$\cS(M)$ there exist 
 \hb{\al_\Sa\geq1}, 
 \ \hb{\ell_\Sa\in\{0,\ldots,m-1\}}, and a compact 
 \hb{(m-\ell-1)}-dimensional submanifold~$B_\Sa$ of~$\BR^d$, where 
 \hb{d\geq m-\ell}, such that $M$~has a smooth 
 cuspidal singularity of type 
 \hb{(\al_\Sa,\ell_\Sa,B_\Sa)} near~$\Sa$;  
  
 \par 
 \hangindent\parindent 
 ($\ba$) there are 
 \hb{k\in\BN} and for each 
 \hb{i\in\{1,\ldots,k\}} an open subset~$U_i$ of~$M$, 
 \ \hb{\al_i\in(-\iy,0)}, an 
 \hb{(m-1)}-dimensional compact Riemannian submanifold~$B_i$ of~$\BR^d$ for 
 some 
 \hb{d\geq m}, and a diffeomorphism 
 \hb{\vp_i\sco U_{\coU j}\ra K_{\al_i}^d(B_j)} such that $M$~has in~$U_i$ 
 an infinite $(\al_i,B_i)$-cusp represented by~$\vp_i$; 
   
 \par 
 \hangindent\parindent 
 ($\ga$) 
 \hb{U_i\cap U_{\coU j}=\es} for 
 \hb{1\leq i<j\leq k}, 
 \ \hb{M\ssm(U_1\cup\cdots\cup U_k)} is relatively compact in~$\BR^n$, and 
 \hb{\cS(M)\ssm(U_1\cup\cdots\cup U_k)=\cS(M)}. 

 \par 
 \noindent 
 \hangindent=0cm
 Then $M$~is said to be a \emph{manifold with cuspidal singularities}. Note 
 that $M$~is relatively compact in~$\BR^n$ if 
 \hb{k=0}. 
 
 \par 
 Every manifold with cuspidal singularities is a singular Riemannian manifold 
 of type~%
 \hb{[\![\rho[\!]}, where 
 \hb{\rho\sim\rho_\al} near a smooth $(\al,\ell)$-wedge 
 \hb{\Sa\is\cS(M)}, 
 \ \hb{\rho\sim\vp_i^*R_{\al_i}} on~$U_i$, 
 \ \hb{1\leq i\leq k}, and 
 \hb{\rho\sim\mf{1}} away from the singularities.\hfill\qed 
 \end{exs} 
 The qualifier `smooth' in the preceding definitions refers to the fact 
 that the bases of the cusps are uniformly regular. If they are singular 
 Riemannian manifolds themselves then we get manifolds with cuspidal 
 corners of various orders. For this we refer to \cite{Ama13d} as well. 
 
 \par 
 B.~Ammann, R.~Lauter, and V.~Nistor~\cite{ALN04a} introduce a class of 
 noncompact Riemannian manifolds, termed Lie manifolds, in order to 
 establish regularity properties of solutions to elliptic boundary value 
 problems on polyhedral domains; also see 
 B.~Ammann, A.D. Ionescu and V.~Nistor~\cite{AIN06a}, 
 B.~Ammann and V.~Nistor~\cite{AmmN07a}, and the references therein, 
 as well as the 
 survey by C.~Bacuta, A.L. Mazzucato, and V.~Nistor~\cite{BMN12a}. These 
 authors use a desingularization technique by which they introduce 
 conformal metrics 
 \hb{g/\rho^2}, where $\rho$~is the distance to the singular set. 
 
 \par 
 Let 
 \hb{M=\Mg} be a singular Riemannian manifold of type~%
 \hb{[\![\rho]\!]}. Then we can apply the results of Sections \ref{sec-F} 
 and~\ref{sec-P} to~$\hat M$, where we have to use 
 \hb{\hat\na:=\na_{\hat g}}, of course. Fortunately, since $\hat g$~is 
 conformal to~$g$ we can express all spaces and operators in terms of~$g$, 
 so that $\hat M$~does not appear in the final results. 
 
 \par 
 Specifically, set 
 \hb{V:=V_{\coV\tau}^\sa} and let 
 \hb{\lda\in\BR}. Then the weighted Sobolev space $W_{\coW p}^{k,\lda}\Vrho$ 
 is for 
 \hb{k\in\BN} the completion of~$\cD(V)$ in~$L_{1,\loc}(V)$ with respect 
 to the norm 
 $$ 
 u\mt\Bigl(\sum_{j=0}^k 
 \big\|\rho^{\lda+j+\tau-\sa}\,|\na^ju|_{g_\sa^{\tau+j}} 
 \,\big\|_{L_p(V)}^p\Bigr)^{1/p}. 
 $$ 
 Weighted Slobodeckii spaces $W_{\coW p}^{s,\lda}\Vrho$ are for 
 \hb{k<s<k+1} again defined by interpolation, that is,  by 
 replacing $W_{\coW p}^\ell(V)$ in \Eqref{F.I.Slo} 
 by $W_{\coW p}^{\ell,\lda}\Vrho$ for 
 \hb{\ell\in\{k,k+1\}}. 
 
 \par 
 Analogously, $B^\lda\Vrho$ is the vector space of all sections~$u$ 
 of~$V$ such that 
 \hb{\rho^{\lda+\tau-\sa}\,|u|_{g_\sa^\tau}\in B(M)}. 
 The norm 
 \hb{u\mt\big\|\rho^{\lda+\tau-\sa}\,|u|_{g_\sa^\tau}\big\|_\iy} makes it 
 a Banach space. If 
 \hb{k\in\BN}, then $B^{k,\lda}\Vrho$ is the Banach space of all 
 \hb{u\in C^k(V)} for which 
 $$ 
 \max_{0\leq j\leq k} 
 \big\|\rho^{\lda+j+\tau-\sa}\,|\na^ju|_{g_\sa^{\tau+j}}\big\|_\iy 
 $$ 
 is finite, endowed with this norm. Furthermore, $bc^{k,\lda}\Vrho$ is 
 the closure of 
 \hb{BC^{\iy,\lda}\Vrho:=\bigcap_kBC^k\Vrho} in 
 $BC^{k,\lda}\Vrho$. Then weighted Besov-H\"older spaces 
 $B_\iy^{s,\lda}\Vrho$ are defined by interpolation in complete analogy to 
 \Eqref{F.B1}. 
 
 \par 
 Weighted \hbox{$L_2$~Sobolev} spaces of this type have been introduced by 
 V.A. Kondrat{\cprime}ev~\cite{Kon67a} in the study of elliptic boundary 
 value problems on domains with singular points. Since then they have 
 been used by numerous authors, predominantly in an Euclidean 
 \hbox{$L_2$~setting}. A~detailed study of the \hbox{$L_p$~case} and 
 references are found in~\cite{Ama12b}. 
 
 \par 
 We set 
 $$ 
 \bal 
 \begin{minipage}{300pt}
 $$ 
 W_{\coW p}^s\Vrho:=W_{\coW p}^{s,0}\Vrho 
 \qb BC^k\Vrho:=BC^{k,0}\Vrho, 
 $$
 \end{minipage}\\ 
 \begin{minipage}{300pt} 
 $$ 
 B_\iy^s\Vrho:=B_\iy^{s,0}\Vrho. 
 $$ 
 \end{minipage} 
 \eal 
 $$ 
 In~\cite{Ama13a} it is proved that 
 \beq\Label{S.WW} 
 \bal 
 \begin{minipage}{300pt}
 $$ 
 W_{\coW p}^s\hV\doteq W_{\coW p}^{s,-m/p}\Vrho 
 \qb BC^k\hV\doteq BC^k\Vrho, 
 $$
 \end{minipage}\\ 
 \begin{minipage}{300pt} 
 $$ 
 B_\iy^s\hV\doteq B_\iy^s\Vrho, 
 $$ 
 \end{minipage} 
 \eal  
 \kern-20pt 
 \npb 
 \eeq 
 where 
 \hb{{}\doteq{}}~means `equal except for equivalent norms' and 
 \hb{\hat V:=T_\tau^\sa\hat M}. 
 In~\cite{Ama13a} it is also shown that 
 \hb{(u\mt\rho^\lda u)} belongs to  
 \beq\Label{S.rho} 
 \Lis\bigl(W_{\coW p}^{s,\lda'+\lda}\Vrho,W_{\coW p}^{s,\lda'}\Vrho\bigr) 
 \cap\Lis\bigl(B_\iy^{s,\lda'+\lda}\Vrho,B_\iy^{s,\lda'}\Vrho\bigr) 
 \eeq 
 for 
 \hb{\lda,\lda'\in\BR}, and 
 \hb{(u\mt\rho^\lda u)^{-1}=(v\mt\rho^{-\lda}v)}. Thus it suffices to study 
 the spaces $W_{\coW p}^s\hV$ and $B_\iy^s\hV$ since by this isomorphism 
 and by \Eqref{S.WW} we can transfer all properties from $W_{\coW p}^s\hV$ 
 onto $W_{\coW p}^{s,\lda}\Vrho$ and from $B_\iy^s\hV$ onto 
 $B_\iy^{s,\lda}\Vrho$. Alternatively, we can refer directly 
 to~\cite{Ama12b}. 
 
 \par 
 Anisotropic weighted Sobolev-Slobodeckii spaces are defined for
 \hb{s\geq0} by 
 $$ 
 W_{\coW p}^{(s,s/2),\lda}(V\times J;\rho) 
 :=L_p\bigl(J,W_{\coW p}^{s,\lda}\Vrho\bigr) 
 \cap W_{\coW p}^{s/2}\bigl(J,L_p^\lda\Vrho\bigr). 
 $$ 
 Analogously, we introduce anisotropic Besov-H\"older spaces for 
 \hb{s>0} by
 $$ 
 B_\iy^{(s,s/2),\lda}\Vrho) 
 :=B\bigl(J,B_\iy^{s,\lda}\Vrho\bigr) 
 \cap B_\iy^{s/2}\bigl(J,B^\lda\Vrho\bigr). 
 $$ 
 Again, we omit the superscript~$\lda$ if it equals zero. It is obvious 
 from the above that all embedding, interpolation, and trace theorems, 
 etc.\ proved in~\cite{Ama12c} carry over to the present setting using 
 natural adaptions. It is also clear that \Eqref{S.WW} implies 
 \beq\Label{S.WWa} 
 W_{\coW p}^{(s,s/2)}\hV\doteq W_{\coW p}^{(s,s/2),-m/p}\Vrho. 
 \eeq 
 Furthermore, 
 \beq\Label{S.rha} 
 (u\mt\rho^\lda u) 
 \in\Lis\bigl(W_{\coW p}^{(s,s/2),\lda'+\lda}\Vrho, 
 W_{\coW p}^{(s,s/2),\lda'}\Vrho\bigr) 
 \npb 
 \eeq 
 for 
 \hb{\lda,\lda'\in\BR} is a consequence of \Eqref{S.rho}. 
 \section{Degenerate Parabolic Problems\Label{sec-D}} 
 In this section we study problem~\Eqref{P.I.P} in the case of singular 
 Riemannian manifolds. It turns out that in this situation 
 Theorem~\ref{thm-P.I.P} leads to an isomorphism theorem for degenerate 
 parabolic initial boundary value problems on weighted Sobolev spaces. 
 
 \par 
 Let 
 \hb{M=\Mg} be a singular Riemannian manifold of type~%
 \hb{[\![\rho]\!]} and 
 \hb{\lda\in\BR}. Similarly as in Section~\ref{sec-P}, we introduce data 
 spaces which are now weighted and \hbox{$\lda$-dependent}. To simplify the 
 presentation we restrict ourselves to the setting of strong 
 $L_p$~solutions. Thus we put 
 $$ 
 \bal 
 &W_{\coW p}^{(2-\vec\da-1/p)(1,1/2),\lda+\vec\da+1/p}(\pl M_T;\thrho)\\ 
 &\qquad{} 
  :=W_{\coW p}^{(2-1/p)(1,1/2),\lda+1/p}(\pl_0M_T;\thrho)  
  \times W_{\coW p}^{(1-1/p)(1,1/2),\lda+1+1/p}(\pl_1M_T;\thrho)
 \eal 
 $$ 
 and 
 $$ 
 \bal 
 &\cW_p^{(2,1),\lda}(M_T;\rho)\\ 
 &{} 
  :=L_p^\lda(M_T;\rho)
  \times W_{\coW p}^{(2-\vec\da-1/p)(1,1/2),\lda+\vec\da+1/p} 
   (\pl M_T;\thrho) 
  \times W_{\coW p}^{2-2/p,\lda}(M;\rho).   
 \eal 
 $$ 
 Similarly as before, $\cW_{p,\cc}^{(2,1),\lda}(M_T;\rho)$ is the 
 linear subspace hereof consisting of all 
 $(f,h,u_0)$ satisfying the compatibility conditions~\Eqref{P.I.CC}. 
 
 \par 
 The differential operator~\Eqref{P.I.A} is \emph{uniformly strongly 
 \hbox{$\rho$-elliptic}} if $a(\cdot,t)$ is symmetric for 
 \hb{t\in J} and there exists a constant 
 \hb{\ve>0} such that 
 \beq \Label{D.ell} 
 \bigl(a(q,t)\btdot X\bsn X\bigr)_{g(q)} 
 \geq\ve\rho^2(q)\,|X|_{g(q)}^2 
 \qa X\in T_qM 
 \qb q\in M 
 \qb t\in J. 
 \eeq  
 
 \par 
 Henceforth, we say that $\AB$ is~a \emph{\hbox{$\rho$-regular} uniformly 
 \hbox{$\rho$-elliptic} boundary value problem on}~$M_T$ if $\cA$~is 
 uniformly strongly \hbox{$\rho$-elliptic}, 
 \beq \Label{D.a} 
 \bal 
 \begin{minipage}{300pt}
 $$ 
 a\in BC^{(1,1/2),-2}(T_1^1M\times J;\rho) 
 \qb \vec a\in L_\iy(TM\times J;\rho), 
 $$ 
 \end{minipage}\\  
 \begin{minipage}{300pt}
 $$ 
 a_0\in L_\iy(M_T), 
 $$ 
 \end{minipage} 
 \eal 
 \kern-20pt 
 \eeq  
 and 
 \beq \Label{D.b} 
 b_0\in BC^{(1,1/2),-1}(\pl_1M_T). 
 \eeq  
 If 
 \hb{\rho=\mf{1}}, then $\AB$ is simply called regularly uniformly elliptic. 
 Note that the first part of \Eqref{D.a} implies 
 \hb{|a|_{g_1^1}\leq c\rho^2}. Using this and the symmetry of $a(\cdot,t)$ 
 we see that \Eqref{D.ell} is equivalent to the existence of 
 \hb{\ve\in(0,1)} with 
 \beq \Label{D.ell1} 
 \ve\rho^2(q)\,|X|_{g(q)}^2 
 \leq\bigl(a(q,t)\btdot X\bsn X\bigr)_{g(q)} 
 \leq\rho^2(q)\,|X|_{g(q)}^2/\ve  
 \npb 
 \eeq 
 for 
 \hb{X\in T_qM}, 
 \ \hb{q\in M}, and 
 \hb{t\in J}.  
 
 \par 
 Now we can formulate the following isomorphism theorem for degenerate 
 parabolic equations. 
 \begin{thm}\LabelT{thm-D.P} 
 Let $M$ be a singular Riemannian manifold of type~%
 \hb{[\![\rho]\!]} and 
 \hb{p\notin\{3/2,3\}}. Suppose that $\AB$ is~a \hbox{$\rho$-regular} 
 uniformly \hbox{$\rho$-elliptic} boundary value problem on~$M_T$ and 
 \hb{\lda\in\BR}. 
 
 \par 
 Then $\cW_{p,\cc}^{(2,1),\lda}(M_T;\rho)$ is closed and 
 $$ 
 (\pl+\cA,\ \cB,\ga_0) 
 \in\Lis\bigl(W_{\coW p}^{(2,1),\lda}(M_T;\rho), 
 \cW_{p,\cc}^{(2,1),\lda}(M_T;\rho)\bigr). 
 $$ 
 \end{thm}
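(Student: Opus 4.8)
The plan is to reduce the statement to Theorem~\ref{thm-P.I.P} by passing to the uniformly regular manifold $\hat M=\Mhg$ with $\hat g=g/\rho^2$, on which the $\rho$-degeneracy of $\cA$ disappears and the weighted spaces over $M$ become ordinary Sobolev--Slobodeckii spaces. Since $M$ is of type~$[\![\rho]\!]$, condition \Eqref{S.SM}(ii) guarantees that $\hat M$ is a genuine uniformly regular Riemannian manifold, so Theorem~\ref{thm-P.I.P} is available on it; the whole argument then consists of recognizing $(\cA,\cB)$ as a regular uniformly strongly elliptic problem on $\hat M$ and transporting the resulting isomorphism across the norm-equivalences of Section~\ref{sec-S}.

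First I would rewrite $\cA$ and $\cB$ in terms of $\hat g$. Under the conformal change one has $\grad_{\hat g}=\rho^2\grad$, $\sqrt{\hat g}=\rho^{-m}\sqrt g$, and the $\hat g$-unit normal equals $\rho\nu$, so a computation identical to \Eqref{P.I.div} shows, with the well-defined $(1,1)$-tensor $\hat a:=\rho^{-2}a$, that $\tdiv_{\hat g}(\hat a\btdot\grad_{\hat g}u)=\tdiv(a\btdot\grad u)+(\vec c\sn\grad u)$ for a first-order remainder $\vec c$ built from $d\log\rho$, and an analogous identity holds for $\cB_1$ with zeroth-order coefficient $\hat b_0:=\rho^{-1}b_0$. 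Hence the \emph{same} operator $\cA$ equals $-\tdiv_{\hat g}(\hat a\btdot\grad_{\hat g}\cdot)+(\hat{\vec a}\sn\grad_{\hat g}\cdot)+a_0$, where $\hat{\vec a}$ collects $\vec c$ and the (rescaled) original drift, and $\cB_1$ is the $\hat g$-Neumann/Robin operator with coefficient $\hat b_0$. The decisive point is \Eqref{S.log}: because $d\log\rho\in BC^\iy(T^*\hat M)$ is $\hat g$-bounded together with all its $\hat\na$-derivatives, the remainder $\vec c$ is $\hat g$-bounded.

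Next I would verify the hypotheses of Theorem~\ref{thm-P.I.P} for $(\hat\cA,\hat\cB):=(\cA,\cB)$ on $\hat M$. Dividing \Eqref{D.ell} by $\rho^2$ and using $|X|_g^2=\rho^2|X|_{\hat g}^2$ turns the $\rho$-ellipticity into uniform strong $\hat g$-ellipticity of $\hat a$. For the coefficient regularity, the identity $BC^k(\hat V)\doteq BC^k(V;\rho)$ of \Eqref{S.WW} together with the weight shift \Eqref{S.rho} converts the weighted hypotheses \Eqref{D.a}, \Eqref{D.b} into $\hat a\in BC^{(1,1/2)}(T_1^1\hat M\times J)$, $\hat b_0\in BC^{(1,1/2)}((T\hat M)_{|\pl_1\hat M}\times J)$, and $\hat{\vec a},a_0\in L_\iy$ on $\hat M$, which is exactly \Eqref{P.a}. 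Theorem~\ref{thm-P.I.P} then gives that $(\pl+\cA,\ \cB,\ga_0)$ is a topological isomorphism of the \emph{unweighted} spaces $W_{\coW p}^{(2,1)}(\hat M_T)$ onto $\cW_{p,\cc}^{(2,1)}(\hat M_T)$, with the compatibility subspace closed. By \Eqref{S.WWa} and its boundary and initial-surface analogues these unweighted spaces over $\hat M$ coincide, up to equivalent norms, with the weight-$(-m/p)$ spaces over $M$; the passage from dimension $m$ to $m-1$ on $\pl M$ (carrying the induced datum $(\thrho,\thgK)$ of Example~\ref{exs-S.exS}(b)) is precisely what produces the extra $1/p$, respectively $1+1/p$, in the boundary data spaces, and it preserves the trace maps in \Eqref{P.I.CC}. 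This establishes the theorem, including closedness of the compatibility subspace, for $\lda=-m/p$.

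For arbitrary $\lda$ I would conjugate by a power $\rho^{\mu}$, $\mu:=\lda+m/p$: by \Eqref{S.rha} and its boundary and initial analogues, multiplication by $\rho^{\mu}$ is a simultaneous isomorphism of all $\lda$-weighted solution and data spaces onto the corresponding $(-m/p)$-weighted ones, and under this conjugation $\cA$ is replaced by $\rho^{\mu}\cA\rho^{-\mu}=\cA+(\text{l.o.t.})$, whose only alteration is in the lower-order coefficients; these remain $\hat g$-bounded, again by \Eqref{S.log}, so the conjugated problem is once more $\rho$-regular uniformly $\rho$-elliptic and the $\lda=-m/p$ case applies to it. The claim then follows for every $\lda\in\BR$. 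The step I expect to be the main obstacle is the boundary bookkeeping: confirming that the $\hat g$-normal, the induced singularity datum on $\pl M$, and the several $\rho$-weights in the boundary data spaces combine so that the weighted boundary operator $\cB$ and the compatibility conditions \Eqref{P.I.CC} translate \emph{exactly} into their unweighted $\hat M$-counterparts; once this matching is pinned down, everything else is the soft transfer of an isomorphism across the equivalences \Eqref{S.WW}--\Eqref{S.rha}.
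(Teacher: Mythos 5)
Your proposal follows essentially the same route as the paper's own proof: pass to $\hat M=(M,g/\rho^2)$ with $\hat a:=\rho^{-2}a$, identify $\cA$ with a regular uniformly strongly elliptic operator $\hat\cA$ on $\hat M_T$ (the paper's \Eqref{D.div}--\Eqref{D.AA}), apply Theorem~\ref{thm-P.I.P} there, transfer the isomorphism back through \Eqref{S.WW}--\Eqref{S.rha} to settle the case \hb{\lda=-m/p}, and then conjugate by~$\rho^{\lda+m/p}$ for general~$\lda$; your final paragraph is precisely the content of the paper's Lemma~\ref{lem-D.con}.

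One correction, at exactly the point you flag as the main obstacle: it is \emph{not} true that $\cB_1$ equals the $\hat g$-Neumann/Robin operator with coefficient \hb{\hat b_0=\thrho^{-1}b_0}. By \Eqref{D.nga} one has $\bigl(\nu\bsn\ga(a\btdot\grad u)\bigr)=\thrho\bigl(\hat\nu\bsn\ga(\hat a\btdot\grad_{\hat g}u)\bigr)_{\hat g}$, hence $\cB u=(\hat\cB_0u,\thrho\hat\cB_1u)$ (the paper's \Eqref{D.BBr}): the two boundary operators differ by the factor~$\thrho$. Theorem~\ref{thm-P.I.P} on~$\hat M$ therefore yields an isomorphism for $(\pl+\hat\cA,\ \hat\cB,\ga_0)$, and to get the stated isomorphism for $(\pl+\cA,\ \cB,\ga_0)$ one must compose with the data map \hb{(f,h,u_0)\mt\bigl(f,(h_0,\thrho^{-1}h_1),u_0\bigr)}, which is an isomorphism of the relevant data spaces by \Eqref{S.rha}. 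This factor~$\thrho$~---~not the dimension drop from $m$ to \hb{m-1}, which accounts only for the $1/p$~shift~---~is what produces the extra~$+1$ in the Neumann weight \hb{\lda+1+1/p}, and it is also what converts the compatibility condition \hb{\cB(\cdot,0)u_0=h(\cdot,0)} into \hb{\hat\cB(\cdot,0)u_0=\hat h(\cdot,0)}. Once this bookkeeping is made explicit (it is the paper's step~(3)), your argument closes the gap you anticipated and coincides with the published proof.
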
 
 The proof of this theorem is given later in this section. First we derive 
 an analogue of Theorem~\ref{thm-P.S}. For this we define 
 $$ 
 W_{\coW p,\cB}^{s,\lda}(M;\rho) 
 \qa s\in[0,2]\ssm\{1/p,1+1/p\}, 
 $$ 
 by replacing $W_{\coW p}^s(M)$ in \Eqref{A.WB} by 
 $W_{\coW p}^{s,\lda}(M;\rho)$. 
 \begin{thm}\LabelT{thm-D.S} 
 Let $M$ be a singular Riemannian manifold of type~%
 \hb{[\![\rho]\!]} and 
 \hb{p\notin\{3/2,3\}}. Suppose $\AB$ is an autonomous 
 \hbox{$\rho$-regular} uniformly \hbox{$\rho$-elliptic} boundary value 
 problem on~$M_T$ and 
 \hb{\lda\in\BR}. Set 
 \hb{A^\lda:=\cA\sn W_{\coW p,\cB}^{2,\lda}(M;\rho)}, considered as an 
 unbounded linear operator in $L_p^\lda(M;\rho)$. Then $-A^\lda$~generates 
 a strongly continuous analytic semigroup on $L_p^\lda(M;\rho)$ and has the 
 property of maximal regularity, that is, 
 \hb{(\pl+\cA,\ \ga)} belongs to 
  $$ 
 \Lis\bigl(L_p(J,W_{\coW p,\cB}^{2,\lda}(M;\rho)) 
 \cap W_{\coW p}^1(J,L_p^\lda(M;\rho)), 
 L_p^\lda(M;\rho)\times W_{\coW p,\cB}^{2-2/p,\lda}(M)\bigr). 
 $$ 
 \end{thm}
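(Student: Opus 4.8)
The plan is to deduce Theorem~\ref{thm-D.S} from the isomorphism Theorem~\ref{thm-D.P} by specializing to autonomous coefficients and homogeneous lateral data, in exact parallel to the passage from Theorem~\ref{thm-P.I.P} to Theorem~\ref{thm-P.S}. First I would note that, since $\AB$~is autonomous, its time-independent coefficients trivially meet the anisotropic regularity demands~\Eqref{D.a} and~\Eqref{D.b}; hence the hypotheses of Theorem~\ref{thm-D.P} hold for every $\lda\in\BR$, and that theorem supplies
$$
(\pl+\cA,\ \cB,\ga_0)
\in\Lis\bigl(W_{\coW p}^{(2,1),\lda}(M_T;\rho),
\cW_{p,\cc}^{(2,1),\lda}(M_T;\rho)\bigr).
$$

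The heart of the reduction is the data-space bookkeeping under the constraint $h=0$. On the source side, the subspace of those $u\in W_{\coW p}^{(2,1),\lda}(M_T;\rho)$ with $\cB u=0$ is exactly $L_p\bigl(J,W_{\coW p,\cB}^{2,\lda}(M;\rho)\bigr)\cap W_{\coW p}^1\bigl(J,L_p^\lda(M;\rho)\bigr)$, since the lateral trace condition $\cB u(\cdot,t)=0$ holds for a.e.~$t$ and thereby decouples. On the target side, the boundary-data factor of $\cW_{p,\cc}^{(2,1),\lda}(M_T;\rho)$ vanishes, while the zeroth-order compatibility conditions~\Eqref{P.I.CC}, evaluated at $h=0$, become $\cB u_0=0$ on $\pl M$ for $p>3$ and $\cB_0u_0=0$ on $\pl_0M$ for $3/2<p<3$ (and impose nothing for $1<p<3/2$). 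Matching these three cases against the case distinction in~\Eqref{A.WB} that defines $W_{\coW p,\cB}^{2-2/p,\lda}(M;\rho)$ shows that the target reduces precisely to $L_p^\lda(M_T;\rho)\times W_{\coW p,\cB}^{2-2/p,\lda}(M;\rho)$. Consequently the isomorphism above restricts to the asserted maximal-regularity statement for $(\pl+\cA,\ga_0)$.

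It remains to pass from maximal regularity to semigroup generation, which is a direct transcription of the proof of Theorem~\ref{thm-P.S}. First I would record that $A^\lda$ is closed and densely defined in $L_p^\lda(M;\rho)$: being the restriction of the bounded operator $\cA\in\cL\bigl(W_{\coW p}^{2,\lda}(M;\rho),L_p^\lda(M;\rho)\bigr)$ (which follows from the $\rho$-regularity~\Eqref{D.a} of the coefficients) to the closed subspace $W_{\coW p,\cB}^{2,\lda}(M;\rho)$, continuously embedded in $L_p^\lda(M;\rho)$, it is closed by \cite[Lemma~I.1.1.2]{Ama95a}, and it is densely defined because $\cD(\ci M)\is W_{\coW p,\cB}^{2,\lda}(M;\rho)$ is dense in $L_p^\lda(M;\rho)$. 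Then the maximal $L_p$-regularity of $A^\lda$ on the finite interval~$J$ implies, by the result of G.~Dore~\cite{Dor93b} already used for Theorem~\ref{thm-P.S}, that $-A^\lda$ generates a strongly continuous analytic semigroup on $L_p^\lda(M;\rho)$. I expect the only step needing genuine care to be the data-space bookkeeping of the second paragraph, where the three $p$-ranges of~\Eqref{A.WB} must be reconciled with the two cases of~\Eqref{P.I.CC}; everything else is either automatic or a verbatim copy of the unweighted argument.
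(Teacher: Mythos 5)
Your proposal is correct and takes essentially the same approach as the paper: the paper's proof of Theorem~\ref{thm-D.S} consists exactly in applying Theorem~\ref{thm-D.P} and repeating the argument that led from Theorem~\ref{thm-P.I.P} to Theorem~\ref{thm-P.S}, namely identifying the compatibility data space under \hb{h=0} with $L_p^\lda(M_T;\rho)\times W_{\coW p,\cB}^{2-2/p,\lda}(M;\rho)$ and then invoking Dore's result~\cite{Dor93b} for the semigroup generation. Your explicit bookkeeping of the three $p$-ranges in \Eqref{A.WB} against \Eqref{P.I.CC}, and the closedness/density remarks for $A^\lda$, merely spell out what the paper leaves implicit.
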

 \begin{proof} 
 This follows from Theorem~\ref{thm-D.P} by the arguments which led from 
 Theorem~\ref{thm-P.I.P} to Theorem~\ref{thm-P.S}. 
 \end{proof} 
 \begin{cor}\LabelT{cor-D.S} 
 Set 
 \hb{A:=A^0}. Then $-A$~generates a strongly continuous analytic 
 semigroup on $L_p(M)$ and has the maximal regularity property on $L_p(M)$. 
 \end{cor}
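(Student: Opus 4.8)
The plan is to read the corollary directly off Theorem~\ref{thm-D.S} by specializing the weight exponent to \hb{\lda=0}. First I would recall the conventions fixed in Section~\ref{sec-S}: by the definition \hb{W_{\coW p}^s(M;\rho):=W_{\coW p}^{s,0}(M;\rho)} one has \hb{W_{\coW p,\cB}^2(M;\rho)=W_{\coW p,\cB}^{2,0}(M;\rho)} and \hb{W_{\coW p,\cB}^{2-2/p}(M)=W_{\coW p,\cB}^{2-2/p,0}(M)}, while \hb{A=A^0=\cA\sn W_{\coW p,\cB}^2(M;\rho)} is by definition \hb{A^\lda} at \hb{\lda=0}, regarded as an unbounded operator in the ground space \hb{L_p^0(M;\rho)}. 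The single identification that renders the passage to the unweighted ground space transparent is \hb{L_p^0(M;\rho)=L_p(M)}, already noted in Section~\ref{sec-S}.

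With these identifications in hand I would simply insert \hb{\lda=0} into the conclusion of Theorem~\ref{thm-D.S}. Its generation statement then asserts verbatim that \hb{-A=-A^0} generates a strongly continuous analytic semigroup on \hb{L_p^0(M;\rho)=L_p(M)}, and its maximal regularity statement says that \hb{(\pl+\cA,\ \ga)} is an isomorphism from \hb{L_p(J,W_{\coW p,\cB}^2(M;\rho))\cap W_{\coW p}^1(J,L_p(M))} onto \hb{L_p(M)\times W_{\coW p,\cB}^{2-2/p}(M)}. These are precisely the assertions of the corollary, so no further argument is required.

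I expect no genuine obstacle here: the entire analytic and degenerate content is carried by the weighted Theorem~\ref{thm-D.S} (and ultimately by Theorem~\ref{thm-D.P}), and the corollary merely isolates the ground-space exponent \hb{\lda=0}. The one point worth spelling out is that the factor \hb{\rho^0=\mf{1}} collapses the base space \hb{L_p^0(M;\rho)} to \hb{L_p(M)}, whereas the domain \hb{W_{\coW p,\cB}^2(M;\rho)=W_{\coW p,\cB}^{2,0}(M;\rho)} of course remains a genuinely weighted second-order space, still carrying \hb{\rho} and \hb{\rho^2} factors on the first and second covariant derivatives. This is exactly what makes \hb{A} the generator associated with a degenerate elliptic operator acting on the non-degenerate ground space \hb{L_p(M)}, which is the unweighted case of primary interest for the Euclidean applications of the introduction such as Theorem~\ref{thm-A.D}.
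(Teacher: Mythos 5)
Your proposal is correct and coincides with the paper's own treatment: the corollary is stated there without a separate proof, being precisely Theorem~\ref{thm-D.S} specialized to \hb{\lda=0} together with the identifications \hb{L_p^0(M;\rho)=L_p(M)} and \hb{W_{\coW p,\cB}^{2,0}(M;\rho)=W_{\coW p,\cB}^2(M;\rho)}. Your closing observation---that only the ground space collapses to the unweighted $L_p(M)$ while the domain of \hb{A=A^0} remains a genuinely weighted space, so that a degenerate operator generates an analytic semigroup with maximal regularity on the unweighted $L_p(M)$---is exactly the intended content of the corollary.
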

 In order to facilitate the proof of Theorem~\ref{thm-D.P} we precede it 
 with a technical lemma. In this connection we identify~$\rho^\lda$ with the 
 point-wise multiplication operator 
 \hb{u\mt\rho^\lda u}. 
 \begin{lem}\LabelT{lem-D.con} 
 Let $\AB$ be a \hbox{$\rho$-regular} uniformly \hbox{$\rho$-elliptic} 
 boundary value problem on~$M_T$ and 
 \hb{\lda\in\BR}. Then there exists another such pair $\ABs$ such that 
 \beq \Label{D.ABc} 
 \AB\circ\rho^\lda=\rho^\lda\circ\ABs. 
 \eeq  
 \end{lem}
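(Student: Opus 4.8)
The plan is to obtain $\ABs$ by conjugating the problem with the multiplication operator $\rho^\lda$. Concretely I would set $\cA':=\rho^{-\lda}\cA\rho^\lda$ and $\cB':=\thrho^{-\lda}\cB\rho^\lda$, reading the operator $\rho^\lda$ on the target of $\AB$ as multiplication by $\rho^\lda$ on the interior component and by $\thrho^\lda=(\rho\sn\pl M)^\lda$ on the boundary component. With these definitions \Eqref{D.ABc} holds by construction, so the entire content of the lemma is that the pair $\ABs$ is again $\rho$-regular and uniformly $\rho$-elliptic.

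First I would compute the conjugated coefficients by means of the Leibniz rules $\grad(\rho^\lda v)=\rho^\lda(\grad v+\lda v\,\grad\log\rho)$ and $\tdiv(\phi W)=\phi\,\tdiv W+(\grad\phi\sn W)$. Exploiting the symmetry of $a$, one finds that $\cA'$ is again of divergence form \Eqref{P.I.A},
$$
\cA'v=-\tdiv(a\btdot\grad v)+(\vec a'\sn\grad v)+a_0'v,
$$
with unchanged top-order coefficient $a'=a$ and
$$
\vec a'=\vec a-2\lda\,a\btdot\grad\log\rho,
$$
$$
a_0'=a_0+\lda(\vec a\sn\grad\log\rho)-\lda\,\tdiv(a\btdot\grad\log\rho)-\lda^2(\grad\log\rho\sn a\btdot\grad\log\rho).
$$
Tracing the same identity shows $\cB_0'=\cB_0=\ga$ and
$$
\cB_1'v=\bigl(\nu\bsn\ga(a\btdot\grad v)\bigr)+b_0'\ga v\qb b_0'=b_0+\lda\bigl(\nu\bsn\ga(a\btdot\grad\log\rho)\bigr).
$$
Since $a'=a$, the uniform strong $\rho$-ellipticity \Eqref{D.ell} and the $a$-part of \Eqref{D.a} are inherited verbatim; only $\vec a'$, $a_0'$, and $b_0'$ require attention.

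The verification rests on converting \Eqref{S.log} into pointwise weighted bounds. By \Eqref{S.log} and the identification \Eqref{S.WW} one has $d\log\rho\in BC^{\iy,0}(T^*M;\rho)$, which for the Levi-Civita derivative of $g$ reads $|\na^j\,d\log\rho|_{g_0^{j+1}}\leq c\rho^{-1-j}$; in particular $|\grad\log\rho|_g\leq c\rho^{-1}$ and $|\na^2\log\rho|_{g_0^2}\leq c\rho^{-2}$. Together with $|a|_{g_1^1}\leq c\rho^2$ and $|\na a|_{g_1^2}\leq c\rho$ from \Eqref{D.a}, the pointwise estimate $|a\btdot\grad\log\rho|_g\leq c\rho$ is immediate, so $\vec a'\in L_\iy(TM\times J;\rho)$, and each summand of $a_0'$ is bounded, so $a_0'\in L_\iy(M_T)$. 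For the boundary coefficient I would invoke the point-wise multiplier theorem \cite[Theorem~14.3]{Ama12c} and the weighted analogue of \Eqref{P.nu}, namely $\nu_\flat\in BC^{\iy,-1}(T^*M;\rho)$, together with the trace theorem, to place $\lda\,\ga(\nu\bsn a\btdot\grad\log\rho)$ in $BC^{(1,1/2),-1}(\pl_1M_T)$ and thus obtain \Eqref{D.b} for $b_0'$. As $\rho$ is time-independent, the anisotropic H\"older-in-time regularity of $a$, $\vec a$, and $b_0$ is carried over unchanged.

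The hard part will be the regularity of $a_0'$ and $b_0'$. The critical term is $\tdiv(a\btdot\grad\log\rho)$, which expands into a contraction of $\na a$ with $\grad\log\rho$ and a contraction of $a$ with the Hessian $\na^2\log\rho$; its boundedness hinges precisely on the estimate $|\na^2\log\rho|_{g_0^2}\leq c\rho^{-2}$, that is, on reading \Eqref{S.log} through the $g$-connection by way of \Eqref{S.WW}, and the analogous point for $b_0'$ forces the weighted regularity of the conormal field. Once these memberships are in place the remaining bounds are routine, and $\ABs$ is a $\rho$-regular uniformly $\rho$-elliptic boundary value problem satisfying \Eqref{D.ABc}.
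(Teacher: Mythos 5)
Your proposal is correct and follows essentially the same route as the paper: the paper writes the conjugation as $\AB\circ\rho^\lda=\rho^\lda\circ\AB+[\AB,\rho^\lda]$ and records the same corrections $\vec a'$, $a_0'$, $b_0'$ (as increments rather than total coefficients, using $\grad\rho^\lda=\rho^\lda\lda\grad\log\rho$), then verifies their regularity exactly as you do, via \Eqref{S.log} and \Eqref{S.WW} giving the weighted bounds on $\grad\log\rho$ and its covariant derivative, the contraction estimates of the appendix, and the weighted conormal regularity $\nu_\flat\in BC^{1,-1}(T^*M_{|\pl M};\rho)$ (obtained from $\hat\nu=\rho\nu$) for the boundary term. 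The only differences are presentational, and your explicit identification of $|\na^2\log\rho|_{g_0^2}\leq c\rho^{-2}$ as the crux of the $\tdiv(a\btdot\grad\log\rho)$ estimate matches the paper's computation.
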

 \begin{proof} 
 (1) 
 Note that 
 \beq \Label{D.con} 
 \AB\circ\rho^\lda=\rho^\lda\circ\AB+\bigl[\AB,\rho^\lda\bigr] 
 \eeq  
 where the commutator 
 $$ 
 \bigl[\AB,\rho^\lda\bigr]u:=\AB(\rho^\lda u)-\rho^\lda\AB u 
 $$ 
 is given by 
 \hb{\bigl([\cA,\rho^\lda],[\cB,\rho^\lda]\bigr)} with 
 $$ 
 [\cA,\rho^\lda]u 
 =-2(a\btdot\grad\rho^\lda\sn\grad u) 
 +\bigl((\vec a\sn\grad\rho^\lda)-\tdiv(a\btdot\grad\rho^\lda)\bigr)u 
 $$ 
 and 
 $$ 
 [\cB,\rho^\lda]u 
 =\bigl(0,(\nu\sn\ga(a\btdot\grad\rho^\lda))u\bigr). 
 $$ 
 For abbreviation, 
 \hb{\gl:=\grad\log}. Then 
 \beq \Label{D.grr} 
 \grad\rho^\lda=\lda\rho^{\lda-1}\grad\rho 
 =\rho^\lda\lda\gl\rho. 
 \eeq  
 We set 
 $$ 
 \bal 
 \begin{minipage}{300pt}
 $$ 
\vec a':=-2\lda a\btdot\gl\rho 
 \qb a_0' 
 :=\lda\bigl((\vec a\sn\gl\rho)-\tdiv(a\btdot\gl\rho)\bigr) 
 -\lda^2(a\btdot\gl\rho\sn\gl\rho),   
 $$ 
 \end{minipage}\\  
 \begin{minipage}{300pt}
 $$ 
 b_0':=\lda\bigl(\nu\bsn\ga(a\btdot\gl\rho)\bigr). 
 $$ 
 \end{minipage} 
 \eal 
 $$ 
 Moreover, 
 $$ 
 \cA'u:=\cA u+(\vec a'\sn\grad u)+a_0'u 
 \qb \cB'u:=\cB u+(0,b_0'\ga u). 
 $$ 
 It follows from \Eqref{D.con} and \Eqref{D.grr} that $\ABs$ satisfies 
 \Eqref{D.ABc}. Hence it remains to show that 
 $(\vec a',a_0',b_0')$ possesses the same regularity properties as 
 $(\vec a,a_0,b_0)$. 
 
 \par 
 (2) 
 We know from \Eqref{S.log} and \Eqref{S.WW} that 
 $$ 
 d\log\rho\in BC^1(T^*\hat M)\doteq BC^1(T^*M;\rho). 
 $$ 
 Using this, 
 \hb{\grad=g^\sh d}, and \Eqref{A.T.nga} it follows 
 \beq \Label{D.gl} 
 \gl\rho=g^\sh\,d\log\rho\in BC^{1,2}(TM;\rho). 
 \eeq  
 It is now an easy consequence of this, the assumptions on $a$ 
 and~$\vec a$, and of \Eqref{A.T.c} that 
 \beq \Label{D.aa} 
 \vec a'\in L_\iy(TM\times J) 
 \qb (\vec a\sn\gl\rho)\in L_\iy(M_T). 
 \eeq  
 From \Eqref{D.gl}, \,\Eqref{A.T.Ca}, and \Eqref{A.T.div1} we infer 
 $$ 
 \bal 
 |\tdiv(a\btdot\gl\rho)| 
 &=|\na(a\btdot\gl\rho)|_{g_1^1}\\ 
 &\leq\rho^{-1}\,|\na a|_{g_1^2}\,\rho\,|\gl\rho|_{g_1^0} 
  +\rho^{-2}\,|a|_{g_1^1}\,\rho^2\,|\na\gl\rho|_{g_1^1}. 
 \eal 
 $$ 
 This guarantees that the second summand  of~$a_0'$ belongs to $L_\iy(M_T)$. 
 Similarly, 
 $$ 
 |(a\btdot\gl\rho\sn\gl\rho)| 
 \leq \rho^{-2}\,|a|_{g_1^1}\,(\rho\,|\gl\rho|_{g_1^0})^2 
 $$ 
 implies that the third summand lies in~$L_\iy(M_T)$ as well. Hence, by the 
 second part of \Eqref{D.aa}, 
 \beq \Label{D.a0} 
 a_0'\in L_\iy(M_T). 
 \eeq  
 
 \par 
 Let $\hat\nu$ be the unit normal vector field of~$\pl\hat M$. In local 
 coordinates, 
 \beq \Label{D.nun} 
 \hat\nu=(\hat g_{11})^{-1/2}\pl/\pl x^1 
 =\rho(g_{11})^{-1/2}\pl/\pl x^1=\rho\nu. 
 \eeq  
 Thus 
 \hb{\nu\in BC^{1,1}(TM_{|\pl M};\rho)}. This implies for the conormal field 
 \beq \Label{D.nu} 
 \nu_\flat=g_\flat\nu\in BC^{1,-1}(T^*M_{|\pl M};\rho). 
 \eeq  
 As above, we derive from \Eqref{D.gl} 
 $$ 
 \ga(a\btdot\gl\rho)\in BC^{(1,1/2)}(TM_{|\pl M};\rho).
 $$ 
 Therefore, by \Eqref{D.nu}, 
 \beq \Label{D.b01} 
 b_0'=\lda\nu_\flat\btdot\ga(a\btdot\gl\rho) 
 \in BC^{(1,1/2),-1}(\pl M_T;\rho). 
 \npb 
 \eeq  
 Now \Eqref{D.aa}, \,\Eqref{D.a0}, and \Eqref{D.b01} imply the assertion. 
\end{proof} 
 \begin{proofofD.P} 
 (1) 
 By the definitions of~$\hat g$ and the gradient we get 
 \beq \Label{D.grad} 
 \rho^{-2}(X\sn\grad_{\hat g}u) 
 =(X\sn\grad_{\hat g}u)_{\hat g} 
 =\dl du,X\dr=(X\sn\grad u) 
 \eeq  
 for any \hbox{$C^1$~function}~$u$ and any vector field~$X$ on~$M$. From 
 this we obtain 
 \beq \Label{D.gr} 
 \grad u=\rho^{-2}\grad_{\hat g}u. 
 \eeq  
 We also note that \hbox{\Eqref{A.T.gst}--\Eqref{A.T.n}} 
 imply 
 \beq \Label{D.st} 
 \vsdot_{\hat g_\sa^\tau}=\rho^{\tau-\sa}\,\vsdot_{g_\sa^\tau} 
 \qa \sa,\tau\in\BN. 
 \eeq  
 
 \par 
 We put 
 \hb{\hat a:=\rho^{-2}a}. Then we infer from \Eqref{D.st} 
 \beq \Label{D.2a} 
 |\hat a|_{\hat g_1^1}=\rho^{-2}\,|a|_{g_1^1}. 
 \eeq  
 Note that 
 \hb{\na\hat a=\rho^{-2}\na a-2(d\log\rho)\btdot\hat a} 
 \,(cf.~\Eqref{D.grr}). Hence, see \Eqref{A.T.c}, 
 $$ 
 \bal 
 \rho\,|\na\hat a|_{g_1^2} 
 &\leq\rho^{-1}\,|\na a|_{g_1^2} 
  +2\rho\,|d\log\rho|_{g_0^1}\,|\hat a|_{g_1^1}\\  
 &=\rho^{-1}\,|\na a|_{g_1^2} 
  +2\,|d\log\rho|_{\hat g_0^1}\,|\hat a|_{g_1^1}, 
 \eal 
 $$ 
 the last equality being a consequence of \Eqref{D.st}. From this, 
 \Eqref{S.log}, \,\Eqref{D.2a}, and the assumption on~$a$ we deduce 
 \beq \Label{D.ah} 
 \hat a\in BC^{(1,1/2)}(T_1^1M\times J;\rho) 
 \doteq BC^{(1,1/2)}(T_1^1\hat M\times J). 
 \eeq  
 By replacing the index~$\sH$ in \Eqref{P.I.div} by~$\hat g$ and using 
 \Eqref{D.grad} and \Eqref{D.gr} we find 
 \beq \Label{D.div} 
 \bal 
 \tdiv(a\btdot\grad u) 
 &=\tdiv(\rho^2\hat a\btdot\grad u)\\ 
 &=\tdiv_{\hat g}(\hat a\btdot\grad_{\hat g}u) 
  +m(\rho\hat a\btdot\grad\rho\sn\grad u)\\ 
 &=\tdiv_{\hat g}(\hat a\btdot\grad_{\hat g}u) 
  +(m\hat a\btdot\rho^{-1}\grad_{\hat g}\rho\sn\grad_{\hat g} u)_{\hat g}. 
 \eal 
 \eeq  
 Observe that 
 $$ 
 \rho^{-1}\grad_{\hat g}\rho 
 =\rho^{-1}\hat g^\sh\,d\rho 
 =\hat g^\sh\,d\log\rho. 
 $$ 
 Hence, by \Eqref{A.T.c} and \Eqref{A.T.nga}, 
 $$ 
 |\hat a\btdot\rho^{-1}\grad_{\hat g}\rho|_{\hat g} 
 \leq|\hat a|_{\hat g_1^1}\,|d\log\rho|_{\hat g_0^1}. 
 $$ 
 This, \Eqref{D.ah}, and \Eqref{S.log} imply 
 \beq \Label{D.ar} 
 \hat a\btdot\rho^{-1}\grad_{\hat g}\rho\in L_\iy(T\hat M\times J). 
 \eeq  
 Furthermore, by \Eqref{D.gr} and 
 \hb{\hat g=\rho^{-2}g}, 
 $$ 
 (\vec a\sn\grad u) 
 =(\vec a\sn\rho^{-2}\grad_{\hat g}u) 
 =(\vec a\sn\grad_{\hat g}u)_{\hat g}.  
 $$ 
 From \Eqref{D.st} we derive 
 \beq \Label{D.al} 
 \big\|\,|\vec a|_{\hat g}\,\big\|_\iy 
 =\big\|\rho^{-1}\,|\vec a|_g\,\big\|_\iy 
 =\|\vec a\|_{L_\iy(TM\times J;\rho)}.  
 \eeq  
 Thus it follows from \Eqref{D.ar} that 
 \beq \Label{D.d} 
 \hat d:=m\hat a\btdot\rho^{-1}\grad_{\hat g}\rho+\vec a 
 \in L_\iy(T\hat M\times J). 
 \eeq  
 Now we put 
 $$ 
 \hat\cA u:=-\tdiv_{\hat g}(\hat a\btdot\grad_{\hat g}u) 
 +(\hat d\sn\grad_{\hat g}u)_{\hat g}+a_0u. 
 $$ 
 Then \Eqref{D.div} shows 
 \beq \Label{D.AA} 
 \cA u=\hat\cA u 
 \qa u\in W_{\coW p}^{(2,1),-m/p}(M_T;\rho) 
 \doteq W_{\coW p}^{(2,1)}(\hat M_T), 
 \npb 
 \eeq  
 the last equivalence being a consequence of \Eqref{S.WW}. 
 
 \par 
 (2) 
 Recalling \Eqref{D.nun} and 
 \hb{\thrho=\rho\sn\pl M}, we find 
 \beq \Label{D.nga} 
 \bal 
 \bigl(\nu\bsn\ga(a\btdot\grad u)\bigr) 
 &=\bigl(\nu\bsn\ga(\hat a\btdot\grad_{\hat g}u)\bigr)\\ 
 &=\thrho^2\bigl(\nu\bsn\ga(\hat a\btdot\grad_{\hat g}u)\bigr)_{\hat g} 
  =\thrho\bigl(\hat\nu\bsn\ga(\hat a\btdot\grad_{\hat g}u)\bigr)_{\hat g}. 
 \eal 
 \eeq  
 We denote by~%
 \hb{{}\thna{}} the Levi-Civita connection of~$\pl M$ and set 
 \hb{\hat b_0:=\thrho^{-1}b_0}. Then 
 \beq \Label{D.b0N} 
 \thna\hat b_0=\thrho^{-1}\thna b_0-(d\log\thrho)\hat b_0. 
 \eeq  
 Relation~\Eqref{D.st} implies 
 $$ 
 \thrho\,|d\log\thrho|_{\ithg_0^1} 
 =|d\log\thrho|_{\hat\ithg_0^1}.  
 $$ 
 Since 
 \hb{b_0\in BC^{(1,1/2),-1}(\pl_1M_T;\rho)} it holds 
 \beq \Label{D.bb} 
 \bal 
 \begin{minipage}{300pt}
 $$ 
 \|\hat b_0\|_{L_\iy(\pl_1\hat M_T)} 
 =\|\thrho^{-1}b_0\|_{L_\iy(\pl_1M_T)}<\iy, 
 $$ 
 \end{minipage}\\  
 \begin{minipage}{300pt}
 $$ 
 \big\|\,|\thna b_0|_{\ithg_0^1}\,\big\|_{L_\iy(\pl_1M_T)}<\iy. 
 $$ 
 \end{minipage} 
 \eal 
 \kern-20pt 
 \eeq  
 Thus we get from \Eqref{D.b0N}, \,\Eqref{D.bb}, Example~\ref{exs-S.exS}(b), 
 and \Eqref{S.log} \,(applied to 
 \hb{\pl_1M=\pl_1\hat M}) that 
 \hb{\big\|\thrho\,|\thna\hat b_0|_{\ithg_0^1}\,\big\|_{L_\iy(\pl_1M_T)}} 
 is finite. This implies 
 \beq \Label{D.b02}
 \hat b_0\in BC^{(1,1/2)}(\pl_1M_T;\rho) 
 \doteq BC^{(1,1/2)}(\pl_1\hat M_T). 
 \eeq  
 Now we set 
 $$ 
 \hat\cB_1u 
 :=\bigl(\hat\nu\bsn\ga(\hat a\btdot\grad_{\hat g}u)\bigr)_{\hat g} 
 +\hat b_0\ga u 
 $$ 
 and 
 \hb{\hat\cB:=(\cB_0,\hat\cB_1)}. Then we see from \Eqref{D.ah}, 
 \,\Eqref{D.d}, \,\Eqref{D.b02}, and \Eqref{D.ell} that $\hAB$ is a 
 regular uniformly elliptic boundary value problem on~$\hat M_T$. 
 Furthermore, 
 \beq \Label{D.BBr} 
 \cB u=(\hat\cB_0u,\thrho\hat\cB_1u). 
 \eeq  
 
 \par 
 (3) 
 Suppose 
 \hb{(f,h,u_0)\in\cW_{p,\cc}^{(2,1),-m/p}(M_T;\rho)}. Then, by \Eqref{S.rha}, 
 $$ 
 \thrho^{-1}h_1\in W_{\coW p}^{(1-1/p)(1,1/2),-(m-1)/p}(\pl_1M_T;\thrho). 
 $$ 
 More precisely, set 
 \hb{\hat h:=(h_0,\thrho^{-1}h_1)}. Then \Eqref{S.rha},\ \,\Eqref{S.WW}, 
 and \Eqref{S.WWa} imply 
 $$ 
 \bigl((f,h,u_0)\mt(f,\hat h,u_0)\bigr) 
 \in\Lis\bigl(\cW_{p,\cc}^{(2,1),-m/p}(M_T;\rho), 
 \cW_{p,\cc}^{(2,1)}(\hat M_T)\bigr). 
 $$ 
 In addition, we deduce from \Eqref{D.AA} and \Eqref{D.BBr} that 
 \hb{u\in W_{\coW p}^{(2,1)}(M_T;\rho)} is a solution of \Eqref{P.I.P} iff 
 \hb{u\in W_{\coW p}^{(2,1)}(\hat M_T)} and 
 \hb{(\pl+\hat\cA,\ \hat\cB,\ga_0)u=(f,\hat h,u_0)}. Now 
 Theorem~\ref{thm-P.I.P} implies the validity of the assertion if 
 \hb{\lda=-m/p}. 
 
 \par 
 (4) 
 Let 
 \hb{\lda\neq-m/p}. Lemma~\ref{lem-D.con} guarantees the existence of~a 
 \hbox{$\rho$-regular} uniformly \hbox{$\rho$-elliptic} boundary value 
 problem $\ABs$ on~$M_T$ such that
 \beq \Label{D.AB} 
 \rho^{\lda}\circ\AB=\ABs\circ\rho^\lda. 
 \eeq  
 By \Eqref{S.rho} and \Eqref{S.rha} it follows that 
 \hb{(f,h,u_0)\in\cW_{p,\cc}^{(2,1),\lda}(M_T;\rho)} iff 
 $$ 
 (f',h',u_0'):=\rho^{\lda+m/p}(f,h,u_0) 
 \in\cW_{p,\cc}^{(2,1),-m/p}(M_T;\rho) 
 $$   
 and 
 \hb{u\in W_{\coW p}^{(2,1),\lda}(M_T;\rho)} iff 
 $$ 
 u':=\rho^{\lda+m/p}u\in W_{\coW p}^{(2,1),-m/p}(M_T;\rho). 
 $$ 
 From \Eqref{D.AB} we get 
 $$ 
 (\cA,\cB,\ga_0)u=(f,h,u_0) 
 \ \Llr\  
 (\cA',\cB',\ga_0)u'=(f',h',u_0). 
 \npb 
 $$ 
 As the claim holds for 
 \hb{\lda=-m/p}, the assertion follows.\hfill$\qed$
 \end{proofofD.P}
 \begin{rems}\LabelT{rems-D.P} 
 (a) 
 It is obvious from this proof that there is a straightforward 
 parameter-dependent analogue of the supplement to Theorem~\ref{thm-P.I.P} 
 for degenerate parabolic problems. 
 
 \par 
 (b) 
 Remarks~\ref{rems-P.I.P} apply in the present setting also.\hfill$\qed$
 \end{rems} 
 \section*{Appendix: Tensor Bundles\Label{sec-A.T}} 
 \renewcommand{\theequation}{\rm A.\arabic{equation}} 
 \setcounter{equation}{0} 
 Let $M$ be a manifold and 
 \hb{V=(V,\pi,M)} a vector bundle of rank~$n$ over it. For 
 a nonempty subset~$S$ of~$M$ we denote by~$V_{|S}$ the 
 restriction~$\pi^{-1}(S)$ of~$V$ to~$S$. If $S$~is a submanifold or a 
 union of connected components of~$\pl M$, then $V_{|S}$~is a vector bundle 
 of rank~$n$ over~$S$. As usual, 
 \hb{V_p:=V_{\{p\}}} is the fibre~$\pi^{-1}(p)$ of~$V$ over~$p$. 
 By $\Ga\SV$ we mean the \hbox{$\BR^S$~module} of all sections of~$V$ 
 (\hh{no} smoothness). 
  
 \par 
 As usual, $TM$ and~$T^*M$ are the tangent and cotangent bundles of~$M$. 
 Then 
 \hb{T_\tau^\sa M:=TM^{\otimes\sa}\otimes T^*M^{\otimes\tau}} is for 
 \hb{\sa,\tau\in\BN} the $(\sa,\tau)$-tensor bundle of~$M$, that is, the 
 vector bundle of all tensors on~$M$ being contravariant of order~$\sa$ 
 and covariant of order~$\tau$. In particular,
 \hb{T_0^1M=TM} and 
 \hb{T_1^0M=T^*M}, as well as 
 \hb{T_0^0M=M\times\BR}.

 \par
 For
 \hb{\nu\in\BN^\times} we put
 \hb{\BJ_\nu:=\{1,\ldots,m\}^\nu}. Then, given local coordinates
 \hb{\ka=(x^1,\ldots,x^m)} and setting
 $$
 \frac\pl{\pl x^{(i)}}
 :=\frac\pl{\pl x^{i_1}}\otimes\cdots\otimes\frac\pl{\pl x^{i_\sa}}
 \qb dx^{(j)}:=dx^{j_1}\otimes\cdots\otimes dx^{j_\tau}
 $$
 for
 \hb{(i)=(i_1,\ldots,i_\sa)\in\BJ_\sa},
 \ \hb{(j)\in\BJ_\tau}, the local representation of~a $(\sa,\tau)$-tensor 
 field 
 \hb{a\in\Ga(T_\tau^\sa M)} with respect to these coordinates is given by
 $$ 
 a=a_{(j)}^{(i)}\frac\pl{\pl x^{(i)}}\otimes dx^{(j)}
 $$ 
 with
 \hb{a_{(j)}^{(i)}\in\BR^{U_{\coU\ka}}}. We use the summation convention 
 for \hbox{(multi-\nobreak)}\linebreak[0]indices
 labeling coordinates or 
 bases. Thus such a repeated index, which appears once as a superscript and 
 once as a subscript, implies summation over its whole range. 

 \par 
 Suppose 
 \hb{\sa_1,\sa_2,\tau_1,\tau_2\in\BN}. Then the 
 \emph{complete contraction}  
 $$ 
 \Ga(T_{\tau_2+\sa_1}^{\sa_2+\tau_1}M) 
 \times\Ga(T_{\tau_1}^{\sa_1}M) 
 \ra\Ga(T_{\tau_2}^{\sa_2}M) 
 \qb (a,b)\mt a\btdot b  
 $$ 
 is defined as follows: Given 
 \hb{(i_k)\in\BJ_{\sa_k}} and  
 \hb{(j_k)\in\BJ_{\tau_k}} for 
 \hb{k=1,2}, we set 
 $$ 
 (i_2;j_1):=(i_{2,1},\ldots,i_{2,\sa_2},j_{1,1},\ldots,j_{1,\tau_1}) 
 \in\BJ_{\sa_2+\tau_1} 
 $$ 
 etc., using obvious interpretations if 
 \hb{\min\{\sa,\tau\}=0}. Suppose 
 \hb{a\in\Ga(T_{\tau_2+\sa_1}^{\sa_2+\tau_1}M)} and 
 \hb{b\in\Ga(T_{\tau_1}^{\sa_1}M)} are locally represented 
 on~$U_{\coU\ka}$ by 
 $$ 
 a=a_{(j_2;i_1)}^{(i_2;j_1)}\,\frac\pl{\pl x^{(i_2)}} 
 \otimes\frac\pl{\pl x^{(j_1)}} 
 \otimes dx^{(j_2)}\otimes dx^{(i_1)} 
 \qb  b=b_{(j_1)}^{(i_1)}\,\frac\pl{\pl x^{(i_1)}} 
 \otimes dx^{(j_1)}. 
 $$ 
 Then the local representation of 
 \hb{a\btdot b} on~$U_{\coU\ka}$ is given by 
 $$ 
 a_{(j_2;i_1)}^{(i_2;j_1)}\,b_{(j_1)}^{(i_1)}\,\frac\pl{\pl x^{(i_2)}} 
 \otimes dx^{(j_2)}. 
 $$ 

 \par 
 Let $g$ be a Riemannian metric on~$TM$. We write
 \hb{g_\flat\sco TM\ra T^*M} for the (fiber-wise defined) Riesz 
 isomorphism. Thus
 \hb{\dl g_\flat X,Y\dr=g\XY} for 
 \hb{X,Y\in\Ga(TM)}, where
 \hb{\pw\sco\Ga(T^*M)\times\Ga(TM)\ra\BR^M} 
 is the natural (fiber-wise defined) duality pairing. The inverse 
 of~$g_\flat$ is 
 denoted by~$g^\sh$. Then~$g^*$, the adjoint Riemannian metric on~$T^*M$, 
 is defined by 
 \hb{g^*(\al,\ba):=g(g^\sh\al,g^\sh\ba)} for 
 \hb{\al,\ba\in\Ga(T^*M)}. In local coordinates 
 \beq \Label{A.T.gg1} 
 g=g_{ij}\,dx^i\otimes dx^j 
 \qb g^*=g^{ij}\,\frac\pl{\pl x^i}\otimes\frac\pl{\pl x^j}, 
 \npb 
 \eeq 
 $[g^{ij}]$~being the inverse of the 
 \hb{(m\times m)}-matrix~$[g_{ij}]$. 
 
 \par 
 The metric~$g$ induces a vector bundle metric on~$T_\tau^\sa M$ which 
 we denote by~$g_\sa^\tau$. In local coordinates 
 \beq \Label{A.T.gst} 
 g_\sa^\tau(a,b)=g_{(i)(j)}g^{(k)(\ell)}a_{(k)}^{(i)}b_{(\ell)}^{(j)}
 \qa a,b\in\Ga(T_\tau^\sa M), 
 \eeq 
 where 
 \beq \Label{A.T.gg2} 
 g_{(i)(j)}:=g_{i_1j_1}\cdots g_{i_\sa j_\sa}
 \qb g^{(k)(\ell)}:=g^{k_1\ell_1}\cdots g^{k_\tau\ell_\tau} 
 \eeq 
 for
 \hb{(i),(j)\in\BJ_\sa} and
 \hb{(k),(\ell)\in\BJ_\tau}. Note 
 \hb{g_1^0=g} and 
 \hb{g_0^1=g^*} and 
 \hb{g_0^0(a,b)=ab} for 
 \hb{a,b\in\Ga(M\times\BR)=\BR^M}. Moreover, 
 \beq \Label{A.T.n} 
 \vsdot_{g_\sa^\tau}\sco\Ga(T_\tau^\sa M)\ra(\BR^+)^M 
 \qb a\mt\sqrt{g_\sa^\tau(a,a)} 
 \eeq
 is the \emph{vector bundle norm} on~$T_\tau^\sa M$ induced by~$g$. 
 It follows that the complete contraction satisfies 
 \beq \Label{A.T.c} 
 |a\btdot b|_{g_{\sa_2}^{\tau_2}} 
 \leq|a|_{g_{\sa_2+\tau_1}^{\tau_2+\sa_1}}\,|b|_{g_{\sa_1}^{\tau_1}}  
 \qa a\in\Ga(T_{\tau_2+\sa_1}^{\sa_2+\tau_1}M) 
 \qb b\in\Ga(T_{\tau_1}^{\sa_1}M). 
 \eeq  

 \par 
 We define a vector bundle isomorphism 
 \hb{T_{\tau+1}^\sa M\ra T_\tau^{\sa+1}M}, 
 \ \hb{ a\mt a^\sh} by 
 \beq \Label{A.T.aa} 
 a^\sh(\al_1,\ldots,\al_\sa,\al,X_1,\ldots,X_\tau) 
 :=a(\al_1,\ldots,\al_\sa,X_1,\ldots,X_\tau,g^\sh\al) 
 \eeq 
 for 
 \hb{X_1,\ldots,X_\tau\in\Ga(TM)} and 
 \hb{\al,\al_1,\ldots,\al_\sa\in\Ga(T^*M)}. If $a_{(j;k)}^{(i)}$ with 
 \hb{(i)\in\BJ_\sa}, 
 \ \hb{(j)\in\BJ_\tau}, and 
 \hb{k\in\BJ_1} is the coefficient of~$a$ in a local coordinate 
 representation, then 
 \beq \Label{A.T.ga} 
 (a^\sh)_{(j)}^{(i;k)}=g^{k\ell}a_{(j;\ell)}^{(i)}. 
 \eeq  
 This implies 
 \beq \Label{A.T.nga} 
 |a^\sh|_{g_{\sa+1}^\tau}=|a|_{g_\sa^{\tau+1}}. 
 \eeq  
 
 \par 
 The Levi-Civita connection on~$TM$ is denoted by 
 \hb{\na=\na_{\cona g}}. We use the same symbol for its natural extension 
 to a metric connection on~$T_\tau^\sa M$. Then the corresponding covariant 
 derivative is the linear map 
 $$
 \na\sco C^\iy(T_\tau^\sa M)\ra C^\iy(T_{\tau+1}^\sa M) 
 \qb a\mt\na a,
 $$
 defined by
 \hb{\dl\na a,b\otimes X\dr:=\dl\na_{\cona X}a,b\dr} for 
 \hb{b\in C^\iy(T_\sa^\tau M)} and  
 \hb{X\in C^\iy(TM)}. It is a well-defined continuous linear 
 map from~$C^1(T_\tau^\sa M)$ into~$C(T_{\tau+1}^\sa M)$, as follows from 
 its local representation. For 
 \hb{k\in\BN} we define 
 $$ 
 \na^k\sco C^k(T_\tau^\sa M)\ra C(T_{\tau+k}^\sa M) 
 \qb a\mt\na^ka 
 \npb 
 $$ 
 by
 \hb{\na^0a:=a} and
 \hb{\na^{k+1}:=\na\circ\na^k}. 
 
 \par 
 In local coordinates 
 \hb{\ka=(x^1,\ldots,x^m)} the volume measure 
 \hb{dv=dv_g} of $\Mg$ is represented by 
 \hb{\ka_*\,dv=\ka_*\sqrt{g}\,dx}, where 
 \hb{\sqrt{g}:=\bigl(\det[g_{ij}]\bigr)^{1/2}} and $dx$~is the Lebesgue 
 measure on~$\BR^m$. 
 
 \par 
 The \hb{contraction} 
 \hb{\sC\sco T_{\tau+1}^{\sa+1}M\ra T_\tau^\sa M}, 
 \ \hb{a\mt\sC a} is given in local coordinates by 
 \hb{(\sC a)_{(j)}^{(i)}:=a_{(j;k)}^{(i;k)}}. It follows 
 \beq \Label{A.T.Ca} 
 |\sC a|_{g_\sa^\tau}=|a|_{g_{\sa+1}^{\tau+1}}. 
 \eeq  
 Recall that the divergence of tensor fields is the map  
 \beq \Label{A.T.div1} 
 \tdiv=\tdiv_{\cotdiv g}\sco C^1(T_\tau^{\sa+1}M) 
 \ra C(T_\tau^\sa M) 
 \qb a\mt\tdiv a:=\sC(\na a). 
 \eeq 
 If $X$~is a $C^1$~vector field on~$M$, then 
 $\tdiv X$ has the well-known local representation 
 \beq \Label{A.T.div2} 
 \frac1{\sqrt{g}}\,\frac\pl{\pl x^i}\bigl(\sqrt{g}\,X^i\bigr) 
 \qa X=X^i\,\frac\pl{\pl x^i}. 
 \eeq 
 The gradient, 
 \hb{\grad u=\grad_gu}, of a \hbox{$C^1$~function}~$u$ is the continuous 
 vector field $g^\sh du$. 
 
 \par 
 Suppose 
 \hb{a\in C^1(T_1^1M)}. Then, in terms of covariant derivatives, 
 \beq \Label{A.T.dg} 
 \tdiv(a\grad u) 
 =a^\sh\btdot\na^2u+\tdiv(a^\sh)\btdot\na u. 
 \eeq  
 
\def\cprime{$'$} \def\polhk#1{\setbox0=\hbox{#1}{\ooalign{\hidewidth
  \lower1.5ex\hbox{`}\hidewidth\crcr\unhbox0}}}

\end{document}